\def\vint_#1{\mathchoice%
        {\mathop{\kern 0.2em\vrule width 0.6em height 0.69678ex depth -0.58065ex
                \kern -0.8em \intop}\nolimits_{\kern -0.4em#1}}%
        {\mathop{\kern 0.1em\vrule width 0.5em height 0.69678ex depth -0.60387ex
                \kern -0.6em \intop}\nolimits_{#1}}%
        {\mathop{\kern 0.1em\vrule width 0.5em height 0.69678ex
            depth -0.60387ex
                \kern -0.6em \intop}\nolimits_{#1}}%
        {\mathop{\kern 0.1em\vrule width 0.5em height 0.69678ex depth -0.60387ex
                \kern -0.6em \intop}\nolimits_{#1}}}
\def\vintslides_#1{\mathchoice%
        {\mathop{\kern 0.1em\vrule width 0.5em height 0.697ex depth -0.581ex
                \kern -0.6em \intop}\nolimits_{\kern -0.4em#1}}%
        {\mathop{\kern 0.1em\vrule width 0.3em height 0.697ex depth -0.604ex
                \kern -0.4em \intop}\nolimits_{#1}}%
        {\mathop{\kern 0.1em\vrule width 0.3em height 0.697ex depth -0.604ex
                \kern -0.4em \intop}\nolimits_{#1}}%
        {\mathop{\kern 0.1em\vrule width 0.3em height 0.697ex depth -0.604ex
                \kern -0.4em \intop}\nolimits_{#1}}}
\newcommand{\aveint}[2]{\mathchoice%
        {\mathop{\kern 0.2em\vrule width 0.6em height 0.69678ex depth -0.58065ex
                \kern -0.8em \intop}\nolimits_{\kern -0.45em#1}^{#2}}%
        {\mathop{\kern 0.1em\vrule width 0.5em height 0.69678ex depth -0.60387ex
                \kern -0.6em \intop}\nolimits_{#1}^{#2}}%
        {\mathop{\kern 0.1em\vrule width 0.5em height 0.69678ex depth -0.60387ex
                \kern -0.6em \intop}\nolimits_{#1}^{#2}}%
        {\mathop{\kern 0.1em\vrule width 0.5em height 0.69678ex depth -0.60387ex
                \kern -0.6em \intop}\nolimits_{#1}^{#2}}}
\numberwithin{equation}{section}
\theoremstyle{plain}
\newtheorem{theorem}[equation]{Theorem}
\newtheorem{lemma}[equation]{Lemma}
\theoremstyle{remark}
\newtheorem{remark}[equation]{Remark}
\theoremstyle{definition}
\newtheorem{definition}[equation]{Definition}
\newtheorem*{question*}{Question}
\title[Self-improvement and connectivity]{Alternative proof of Keith-Zhong self-improvement and connectivity}
\author[S.\! Eriksson-Bique]{Sylvester Eriksson-Bique}
\address[S.E.-B.]{Department of mathematics, UCLA, 520 Portola Plaza, Los Angeles CA 90095, USA}
\email{syerikss@math.ucla.edu}
\newcounter{prob}
\newcommand{\Z}{\ensuremath{\mathbb{Z}}}
\newcommand{\N}{\ensuremath{\mathbb{N}}}
\newcommand{\R}{\ensuremath{\mathbb{R}}}
\newcommand{\M}{{\mathcal M}}
\newcommand{\LIP}{\ensuremath{\ \mathrm{LIP\ }}}
\newcommand{\Lip}{\ensuremath{\mathrm{Lip\ }}}
\newcommand{\defeq}{\mathrel{\mathop:}=}
\newcommand{\len}{\ensuremath{\mathrm{Len}}}
\def\XXint#1#2#3{{\setbox0=\hbox{$#1{#2#3}{\int}$ }
\vcenter{\hbox{$#2#3$ }}\kern-.58\wd0}}
\newcommand{\co}{\mskip0.5mu\colon\thinspace}   
\begin{document}

\maketitle

\begin{abstract}
\noindent We find a new proof for the celebrated theorem of Keith and Zhong that a $(1,p)$-Poincar\'e inequality self-improves to a $(1,p-\epsilon)$-Poincar\'e inequality. The paper consists of a novel characterization of Poincar\'e inequalities and then uses it to give an entirely new proof which is closely related to Muckenhoupt-weights. This new characterization, and the alternative proof, demonstrate a formal similarity between Muckenhoupt-weights and Poincar\'e inequalities. The proofs we give are short and somewhat more direct. With them we can give the first completely transparent bounds for the quantity of self-improvement and the constants involved. We observe that the quantity of self-improvement is, for large $p$, directly proportional to $p$, and inversely proportional to a power of the doubling constant and the constant in the Poincar\'e inequality. The proofs can be localized and thus we obtain more transparent proofs of the self-improvement of local Poincar\'e inequalities. \\ \\

\noindent Keywords: Poincar\'e inequality, self-improvement, metric spaces, PI-spaces, analysis on metric spaces, connectivity, Muckenhoupt-weights \\ \\
\noindent MSC: 30L99, 42B25, 39B72

\end{abstract}

\tableofcontents

\section{Introduction}

\subsection{Self-improvement of Poincar\'e inequalities}

Our goal is two-fold. On the one hand, we wish to reprove a result by Keith and Zhong on the self-improvement of Poincar\'e inequalities \cite{keith2008poincare}, and to give explicit bounds for the quantity of self-improvement. Prior to Keith's and Zhong's result it was common to assume a $(1,q)$-Poincar\'e inequality for some $q<p$ when proving statements involving functions in the Sobolev space with an exponent $p$. The result of Keith and Zhong replaces this assumption with a more natural assumption of a $(1,p)$-Poincar\'e inequality, and thus is widely applied in the study of analysis on metric measure spaces. Despite its significance, its proof has remained somewhat myserious to many outside of a small community of experts. In order to remedy this situation, we aim to give a more direct and transparent proof, that is based on new ideas of iteration and curve fragments. These ideas may become useful in studying other self-improvement phenomena as well. 

On the other hand, our goal is to draw attention to an intimate connection between the theory of Muckenhoupt-weights (see \cite{stein2016harmonic}) and Poincar\'e inequalities. It is well-known, that the results of self-improvement for Muckenhoupt-weights and Poincar\'e inequalities bear striking similarity. However, that this similarity extends to the level of proofs and definitions is surprising. When the underlying metric space is $X=\R$, Muckenhoupt weights coincide with those doubling measures permitting Poincar\'e inequalities \cite{bjorn2006admissible}. In a general metric space the question is much more subtle, but we describe a sense in which a Poncar\'e inequality can be characterized by a Muckenhoupt-type condition ``along some curves''.

To state the result, we will need the following terminology. For simplicity, we will consistently work with proper metric measure spaces $(X,d,\mu)$ equipped with locally finite measures $\mu$ such that $0<\mu(B(x,r))<\infty$ for all open balls $B(x,r) \subset X$. 

\begin{definition}
A proper metric measure space $(X,d,\mu)$ equipped with a Radon measure $\mu$ is said to be {\sc $D$-doubling} if for all $0<r$ and any $x \in X$ we have 
\begin{equation}
\frac{\mu(B(x,2r))}{\mu(B(x,r))} \leq D.
\end{equation}
We say that $(X,d,\mu)$ is {\sc $D$-doubling up to scale $r_0$} if the same holds for all $r \in (0,r_0)$.
\end{definition}

The average of a measurable function $f \co X \to \R$ on a metric measure space $(X,d,\mu)$ over a measurable set $A$, with $0<\mu(A)<\infty$, is denoted by $$f_A\defeq \vint_A f ~ d\mu \defeq \frac{1}{\mu(A)} \int_A f ~d\mu,$$ when it makes sense, and it's local (upper) Lipschitz constant is defined as $$\Lip f(x) \defeq \limsup_{y \to x, y \neq x} \frac{|f(x)-f(y)|}{d(x,y)}.$$
If $B=B(x,r)$ is a ball, we denote $CB=B(x,Cr)$ (despite the ambiguity that a ball as a set may not be uniquely defined by a center and a radius).

\begin{definition} Let $1 \leq p < \infty$ be given.
A proper metric measure space $(X,d,\mu)$ with a Radon measure $\mu$ and $\text{supp}(\mu)=X$ is said to satisfy a {\sc $(1,p)$-Poincar\'e inequality (with constants $(C,C_{PI})$)} if for all Lipschitz functions $f$ and all $x \in X, 0<r$ we have for $B=B(x,r)$
$$\vint_B |f-f_B| ~d\mu \leq C_{PI}r  \left(\vint_{CB} (\Lip f)^p ~d\mu \right)^{\frac{1}{p}}.$$
We say that $(X,d,\mu)$ satisfies a {\sc $(1,p)$-Poincar\'e inequality (with constants $(C,C_{PI})$) up to scale $r_0>0$} if the same holds for all $r \in (0,r_0)$. If $X$ is $D$-doubling and satisfies a $(1,p)$-Poincar\'e inequality, then it is called a {\sc PI-space}.
\end{definition} 
This inequality could be expressed in different generalities, but we choose this simple expression as it is sufficient. For a detailed discussion of these issues we refer to \cite{keith2003modulus, hajlasz1995sobolev, Heinonen2000}. 

By an application of H\"older's inequality, we can see that for smaller $p$ the $(1,p)$-Poincar\'e inequality becomes stronger. Thus, the following theorem of Keith and Zhong is called a self-improvement result. For a more detailed discussion of the background 
we refer to \cite{keith2008poincare, bjorn2011nonlinear}. For the original proof, see \cite{keith2008poincare}, or its presentation in \cite{heinonen2015sobolev}.

\begin{theorem}[Keith-Zhong \cite{keith2008poincare}]\label{thm:maintheorem} Assume $p>1$. Let $(X,d,\mu)$ be a proper $D$-doubling metric measure space with a $(1,p)$-Poincar\'e inequality with constants $(C,C_{PI})$. There exists a positive constant $\epsilon(D,p,C_{PI})>0$  such that for\- any $\epsilon \in (0,\epsilon(D,p,C_{PI}))$ the space admits a $(1,p-\epsilon)$-Poincar\'e inequality with constants $C'=C'(D,C_{PI},\epsilon,C)$,  $C_{PI}'=C_{PI}'(D,p,C_{PI},\epsilon,C)$. 
\end{theorem}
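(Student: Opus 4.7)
My plan is to reformulate the $(1,p)$-Poincar\'e inequality as a condition on curve fragments joining points in a ball, in the spirit of Keith's modulus characterization of Poincar\'e inequalities but localized enough to look like a Muckenhoupt $A_p$ condition. Concretely, for each ball $B$ and each Lipschitz function $f$ I would seek a measure $\sigma$ supported on fragments of curves joining points of $B$ (inside $CB$), with $p$-energy controlled by $\mu(B)$, such that the oscillation $|f(x)-f(y)|$ is controlled after averaging the line integrals of $\Lip f$ along these fragments against $\sigma$. The aim is to set this up so the pairing between $\Lip f$ and the mass of admissible fragments has the same formal structure as the $A_p$ duality between a weight and its dual weight. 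The first technical task is therefore to derive this characterization from the $(1,p)$-PI hypothesis while tracking constants explicitly in terms of $D$, $p$ and $C_{PI}$.

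With the characterization in place, I would next prove that it self-improves. The template is the classical proof that an $A_p$ weight satisfies a reverse H\"older inequality: one runs a Calder\'on-Zygmund stopping-time decomposition on superlevel sets of $\Lip f$ (or a maximal function built from it), and on the complement one recovers a stronger pointwise inequality. Doubling ensures that at each stage the bad set can only slightly inflate the good set. Iterating on finer and finer scales, and at each stage refining the families of admissible curve fragments by excising the pieces that traverse the bad sets, one should arrive at the analogous reverse H\"older improvement, i.e.\ at a $(1,p-\epsilon)$-version of the curve-fragment condition, with $\epsilon$ quantitatively controlled by $D$, $p$, $C_{PI}$.

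Finally, I would invert the first step: because the characterization is set up symmetrically, the improved curve-fragment condition feeds directly back into a genuine $(1,p-\epsilon)$-Poincar\'e inequality, with constants depending on those from the previous steps together with the original parameters.

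The main obstacle will be the iteration in the second step. In the classical Muckenhoupt setting the Calder\'on-Zygmund argument is a clean distribution estimate on one measure on the ambient space; here the ``weight'' is $\Lip f$ but the combinatorial objects are curve fragments, and at each iteration one has to combine, subdivide and re-select fragments while preserving compatibility with the metric, the doubling measure, and the admissibility conditions. Controlling the loss per iteration tightly enough to obtain explicit bounds that depend only on $D$, $p$ and $C_{PI}$, and that are linear in $p$ for large $p$ as advertised, is where the real work will lie; this is also where the novelty over the original Keith--Zhong argument should emerge.
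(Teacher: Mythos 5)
Your outline shares the paper's broad philosophy (a curve-based reformulation of the Poincar\'e inequality, a Muckenhoupt analogy, and an iteration across scales), but the step that actually constitutes the proof is missing. The reverse-H\"older template you invoke does not transfer as stated: for an $A_p$ weight the object being improved is a single fixed function on the space, and the Calder\'on--Zygmund decomposition of its superlevel sets directly yields a distributional gain. Here the self-improvement must hold uniformly over \emph{all} Lipschitz functions (equivalently, all obstacle functions $g$), so a stopping-time argument on the superlevel sets of one $\Lip f$ cannot close on itself. What is needed -- and what your sketch defers to ``where the real work will lie'' -- is a quantitative mechanism by which the excised bad portions of a curve are \emph{reconnected} with a contribution that is absorbable. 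Concretely: (i) the endpoints of each excised gap must be shown to lie outside the bad set, so that a localized maximal function at those endpoints is bounded by a definite multiple $M^{i_0}\tau$ of the original level, allowing the connectivity hypothesis to be reapplied at the smaller scale; (ii) the level $M^{i_0}\tau$ cannot be fixed in advance independently of $g$, so one must run the construction over $k$ levels simultaneously and select $i_0$ by a pigeonhole/averaging argument; and (iii) the total length of the gaps must be at most $\delta M^{-i_0} r$ so that the refilled contribution is $\delta$ times a rescaled copy of the same connectivity functional, which is then absorbed by sublinearity. Without this structure the iteration has no contraction and does not converge, and no bound on $\epsilon$ emerges.

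A secondary but substantive divergence: your proposed characterization via a measure $\sigma$ on families of curve fragments with controlled $p$-energy is essentially Keith's modulus condition, i.e.\ the ``thick family'' formulation. The paper's characterization ($A_p$-connectivity) deliberately replaces this with the existence of a \emph{single} curve $\gamma$ joining $x$ to $y$ with $\int_\gamma g\,ds \leq C_A\, d(x,y)\bigl(\M_{p,Cr}g(x)+\M_{p,Cr}g(y)\bigr)$ for every lower semicontinuous obstacle $g$; this single-curve form is what makes the excise-and-refill surgery tractable, since one only ever modifies one curve at a time rather than re-weighting a family. If you insist on the modulus/family formulation, the combinatorial difficulty you correctly identify at the end (combining, subdividing and re-selecting fragments compatibly with admissibility) becomes essentially the difficulty of the original Keith--Zhong proof, and the explicit, $p$-linear bound on $\epsilon$ you are aiming for would be very hard to extract.
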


Our proof gives the following bound, which shows that the quantity of self-improvement is independent of the inflation factor $C$,
$$\epsilon(D,p,C_{PI}) \geq \frac{p}{(2^{13p+3}C_{PI}^p D^{3p+4})^{\frac{1}{p-1}}}.$$

Letting $p \to \infty$, we obtain the asymptotic estimate for the improvement $\frac{p}{2^{12}C_{PI}D^{3}}$. This is, naturally, not a tight bound. This estimate means that for larger $p$ the improvement in Keith-Zhong becomes larger, and in fact is linearly proportional to it for large $p$. We remark, that sharp bounds for the self-improvement of Muckenhoupt-weights have been studied in \cite{hytonen2012sharp}, as well as the references mentioned therein. 

Another reproof has been concurrently developed by other authors in \cite{kinnunen2017maximal}. Their methods yield more general insights into self-improvement phenomena, while this write up is restricted to classical Poincar\'e inequalities. Also, a careful examination of their paper seems to lead to similar bounds for the self-improvement.

We would also like to mention the recent unpublished work of Luk\'a\v{s} Mal\'y on types of Lorentz-Poincar\'e inequalities without self-improvement, and general conditions for self-improvement for various types of Poincar\'e inequalities.  

\subsection{Proof techniques and characterizations of Poincar\'e inequalities}

We were motivated to re-investigate the beautiful and insightful proof of the Keith-Zhong result  \cite{keith2008poincare} for a few reasons. Firstly, the original proof is somewhat non-intuitive. It proceeds by an abstract argument estimating distributions of certain maximal functions, where the relationships between different estimates is only revealed at the very end. This makes the argument somewhat indirect. As a consequence, extracting bounds from their proof seems very complicated. This was done in \cite{heinonen2015sobolev}, but the bounds seem to deteriorate for large exponents $p$. The bounds we obtain below are much sharper. 

On the other hand, we have worked on more general applications of ``self-improvement''-type methods, where much of the machinery of the original proof of Keith and Zhong become unnecessary \cite{sylvester:poincare}. Our goal is to understand whether the framework of \cite{sylvester:poincare} could be used to provide an easier proof of the Keith-Zhong result. This framework is based on tools such as iteration and the idea of ``refilling'' curves. However, to achieve this goal we need new techniques, because the paper in \cite{sylvester:poincare} does not give sharp characterizations of Poincar\'e inequalities. More precisely, while those results are sharp in general, for several classes of spaces better results can be obtained, and thus we needed to develop an understanding of different characterizations.

These characterizations come in the flavor of Muckenhoupt-type conditions. Thus, an additional motivation of this paper is to study the formal similarity between Poincar\'e inequalities and Muckenhoupt-weights. This similarity was alluded to in our prior paper \cite{sylvester:poincare}, but we wish to make this formal analogy more precise. In the process, we obtain a new characterization of Poincar\'e-inequalities that clarifies the dependence of the exponent. This relationship to Muckenhoupt-weights has been previously observed in \cite{bjorn2006admissible} as a way of characterizing measures on $\R$ which admit Poincar\'e inequalities. Thus, our results can be thought of as weaker and higher dimensional analogues of such characterizations. 



\begin{theorem}\label{thm:classification} For a proper metric measure space $(X,d,\mu)$ which is $D$-doubling the following conditions are equivalent.

\begin{enumerate}
\item[$PI_p:$] $X$ satisfies a $(1,p)$-Poincar\'e inequality.
\item[$PtPI_p:$] $X$ satisfies a pointwise Poincar\'e inequality: There are constants $(C,C_{PPI})$ such that for every continuous $f$ and any upper gradient $g$ for $f$ and all $x,y \in X$  with $d(x,y)=r$ the following estimate holds:
$$|f(x)-f(y)| \leq C_{PPI}d(x,y)\left(\M_{p,Cr}g(x)+\M_{p,Cr}g(y)\right).$$ 
\item[$A_pC:$] $X$ is $A_p$-connected: There are constants $(C,C_{A})$ such that for every non-negative lower semi-continuous $g$, and any $x,y \in X$ with $d(x,y)=r$, there is a Lipschitz curve $\gamma$ connecting $x$ to $y$ with $\len(\gamma) \leq Cd(x,y)$ and
$$\int_{\gamma} g ~ds \leq C_{A} d(x,y)\left( \M_{p,Cr}g(x) + \M_{p,Cr}g(y) \right).$$
\end{enumerate}

The constants denoted $C$ with or without subscripts in the various statements can be different and depend quantitatively on each other.
\end{theorem}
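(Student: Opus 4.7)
The plan is to establish the equivalences cyclically as $PI_p \Rightarrow PtPI_p \Rightarrow A_pC \Rightarrow PI_p$. The first and third implications adapt standard techniques; the middle implication is where I expect the main novelty and the principal obstacle.

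First, for $PI_p \Rightarrow PtPI_p$, I would use classical Haj\l asz-Koskela telescoping. Fix $x, y$ with $r = d(x,y)$ and consider dyadic balls $B_i^x := B(x, 2^{-i}r)$ and analogously $B_i^y$ at $y$. Applying the $(1,p)$-Poincar\'e inequality together with doubling gives
\[ |f_{B_{i+1}^x} - f_{B_i^x}| \leq C \cdot 2^{-i} r \left(\vint_{CB_i^x} g^p \, d\mu\right)^{1/p} \leq C \cdot 2^{-i} r \cdot \M_{p,Cr}g(x). \]
Summing the geometric series and letting $i \to \infty$ via Lebesgue differentiation yields $|f(x) - f_{B_0^x}| \leq Cr \M_{p,Cr}g(x)$, and the analogous bound at $y$ plus one more $PI_p$ application comparing $f_{B_0^x}$ with $f_{B_0^y}$ finishes the pointwise inequality.

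For the crucial step $PtPI_p \Rightarrow A_pC$, fix $x, y$ with $r = d(x,y)$ and lower semi-continuous $g \geq 0$. Set $\alpha := \M_{p,Cr}g(x) + \M_{p,Cr}g(y) > 0$ (the degenerate case handled by a limit) and introduce
\[ f_\alpha(z) := \inf\left\{\int_\gamma (g+\alpha) \, ds : \gamma \text{ Lipschitz curve from } x \text{ to } z\right\}. \]
Concatenating near-minimizers with arbitrary rectifiable curves (reparametrized by arclength) shows that $g+\alpha$ is a genuine upper gradient of $f_\alpha$. Applying $PtPI_p$ to $f_\alpha$, using Minkowski to get $\M_{p,Cr}(g+\alpha) \leq \M_{p,Cr}g + \alpha$ and noting $f_\alpha(x) = 0$, yields
\[ f_\alpha(y) \leq C_{PPI} r \bigl(\M_{p,Cr}(g+\alpha)(x) + \M_{p,Cr}(g+\alpha)(y)\bigr) \leq 3 C_{PPI} r \alpha. \]
A near-minimizer $\gamma$ with $\int_\gamma (g+\alpha) \, ds \leq 4 C_{PPI} r \alpha$ satisfies $\alpha \cdot \len(\gamma) \leq 4 C_{PPI} r \alpha$, hence $\len(\gamma) \leq 4 C_{PPI} r$, while $\int_\gamma g \, ds \leq 4 C_{PPI} r \alpha$ is the desired $A_pC$ bound.

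For $A_pC \Rightarrow PI_p$, take Lipschitz $f$ with upper gradient $g$ and a ball $B = B(x_0, R)$. The $A_pC$ curves give $|f(x) - f(y)| \leq C_A d(x,y)(\M_{p,CR}g(x) + \M_{p,CR}g(y))$ for $x, y \in B$, and averaging over $B \times B$ reduces the task to bounding $\vint_B \M_{p,CR} g \, d\mu$ by a multiple of $(\vint_{C'B} g^p \, d\mu)^{1/p}$, which I would handle via H\"older together with the $L^q$-boundedness of the local maximal function for $q>1$, after localizing to a controlled supporting ball via doubling. The principal obstacle lies in the middle step: one must first establish quasiconvexity (needed for $f_\alpha$ to be finite and for concatenation arguments to yield the upper-gradient property), which can be bootstrapped by running the same $f_\alpha$ construction with $g \equiv 0$ and a vanishing auxiliary parameter. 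Once this is settled, the key insight---that $\alpha$ should scale with the endpoint maximal functions of $g$---is what allows a single application of $PtPI_p$ to produce the length and the integral bounds in $A_pC$ simultaneously.
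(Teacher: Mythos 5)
Your overall architecture and, in particular, your treatment of the key implication $PtPI_p \Rightarrow A_pC$ coincide with the paper's: the paper also defines $\mathcal{F}(z)=\inf_\gamma\int_\gamma (g+\epsilon)\,ds$ with the additive constant taken comparable to $\M_{p,Cr}g(x)+\M_{p,Cr}g(y)$, applies the pointwise inequality to it, and reads off the length bound and the integral bound from the same near-minimizer. The outer implications are the classical Haj\l asz--Koskela telescoping and maximal-function arguments, which the paper simply cites. However, there are two genuine gaps in your write-up. First, $PtPI_p$ applies only to \emph{continuous} $f$, and your $f_\alpha(z)=\inf_\gamma\int_\gamma(g+\alpha)\,ds$ need not be continuous when $g$ is unbounded (quasiconvexity gives finiteness, not continuity, since $g+\alpha$ being an upper gradient of $f_\alpha$ controls oscillation only through integrals of an unbounded function). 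The paper avoids this by truncating, $g_{N,\epsilon}=\min(g+\epsilon,N)$, so that $\mathcal{F}_{N,\epsilon}$ is $NL$-Lipschitz, and then removes the truncation with Arzel\`a--Ascoli, lower semicontinuity of curve integrals, and monotone convergence; your argument needs the same device. Second, in $A_pC\Rightarrow PI_p$ you apply the $A_pC$ curves to an upper gradient $g$ (or to $\Lip f$), but $A_pC$ is stated only for \emph{lower semicontinuous} $g$, and a general Borel upper gradient (and $\Lip f$ itself) need not be lsc. One must first majorize $g$ by an lsc function whose localized maximal function at the two endpoints $x,y$ increases by at most $\epsilon$; this Vitali--Carath\'eodory-type approximation is exactly the content of the paper's Lemmas \ref{lem:approxset}--\ref{lem:approx} and is not automatic.

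A structural remark: by insisting on a strict cycle $PI_p\Rightarrow PtPI_p\Rightarrow A_pC\Rightarrow PI_p$, you force yourself to extract quasiconvexity from $PtPI_p$ alone, and your proposed bootstrap (run $f_\alpha$ with $g\equiv 0$) runs into the same continuity obstruction: the truncated inner-distance function is not known to be continuous before quasiconvexity is established, so $PtPI_p$ cannot be applied to it directly. The paper sidesteps this entirely by proving the implication in the form ``$PI_p$ and $PtPI_p$ together imply $A_pC$,'' which is legitimate once $PI_p\Leftrightarrow PtPI_p$ is known, and then quasiconvexity of PI spaces is available from standard references. I would recommend adopting that logical structure rather than repairing the bootstrap.
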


Recall, that for a locally integrable and measurable function $f \in L^p_{loc}$ we define
$$\M_{p,s} f(x) \defeq \sup_{r \in (0,s)} \Bigg( \vint_{B(x,r)} f^p ~d\mu \Bigg)^\frac{1}{p}.$$

This result is closely related to a lemma by Heinonen and Koskela \cite[Lemma 5.1]{heinonen1998quasiconformal}. The novelty is on the new notion of $A_p$-connectivity that arises. It reduces the problem of proving a Poincar\'e inequality to finding a single curve, with controlled length and integral. However, the difficulty is to do this for an arbitrary pair of points and every function. In a sense, this notion of connectivity is nothing other than a  reformulated modulus estimate involving Riesz kernels from Keith \cite{keith2003modulus}. Here, the modulus condition is reformulated as a problem of finding curves with small integrals. This point, while present in some work, seems to not have been fully utilized, and doesn't appear explicitly in prior literature. Formally, the task of constructing a single curve is much easier than constructing ``thick'' curve families, which traditionally is involved in proving modulus estimates.

The task of constructing a curve can be done iteratively, which is the core idea in \cite{sylvester:poincare}, and is reformulated here. This idea involves both the notion of ``level'' and ``scale''. The iteration is started by constructing an initial curve, where some bad behavior occurs only on some small set. By replacing the portions in this bad set, we obtain a better curve than initially expected. The replacing is done at a smaller scale. The badness corresponds to the size of $\M_{p,s}g$, where $s$ is the scale we are working at. This size of $\M_{p,s}g$ is also referred to as a level, and will be a definite amount larger than the initial level. If a ``good'' level can be chosen, such that the size of the next scale is small enough compared to it, then we can obtain an absorbable lower order term. 

This idea of absorbing a term from a higher level and smaller scale is included indirectly in \cite{keith2008poincare}, and forms the core of many good-$\lambda$-type inequalities. The estimate, which involves the level in a scale invariant way, leads to the definition of an $\alpha$-function. This function describes the connectivity of the space and naturally encodes the iteration procedure. A similar function appear is \cite{lerner2007}, and our terminology is motivated by theirs. In \cite{sylvester:poincare} the iteration is done differently. There, the desired curve is directly constructed via an infinite recursion and limiting process, where at each step some ``gaps'' or undefined portions of the curve are refilled. Here, we can avoid both the use of ``gaps'' and the use of an infinite recursion. In a sense, the new function measures connectivity at various levels in a scale invariant way. 



Finally, we remark, that our methods are local, and thus we obtain the following transparent local version of self-improvement. 

\begin{theorem}\label{thm:maintheoremlocal} Assume $p>1$. Let $(X,d,\mu)$ be a proper metric measure space, which is $D$-doubling up to scale $r_D$ with a $(1,p)$-Poincar\'e inequality with constants $(C,C_{PI})$ up to scale $r_{PI}$. There exists a $\epsilon(D,p,C_{PI})>0$  such that for any $\epsilon \in (0,\epsilon(D,p,C_{PI}))$ the space admits a $(1,p-\epsilon)$-Poincar\'e inequality with constants $C'=C'(D,C_{PI},\epsilon,C), C_{PI}'=C_{PI}'(D,p,C_{PI},\epsilon,C)$ up to scale 
$$r_0 \leq \min\bigg\{\frac{r_{PI}}{4}, \frac{r_D}{20C}\bigg\}.$$ 
\end{theorem}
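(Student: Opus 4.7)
The plan is to follow the proof of Theorem \ref{thm:maintheorem} step by step and confirm that every invocation of doubling or of the $(1,p)$-Poincar\'e inequality takes place on a ball whose radius is a fixed multiple of the scale at which the improved inequality is being verified. Once this is checked, the proof localizes mechanically, and the constraint on $r_0$ in the statement is just the requirement that these enlarged balls fit inside the scales $r_D$ and $r_{PI}$ where the hypotheses actually hold.

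First I would establish a local version of the characterization in Theorem \ref{thm:classification}: if $X$ is $D$-doubling up to scale $r_D$ and satisfies a $(1,p)$-Poincar\'e inequality up to scale $r_{PI}$, then the $A_p$-connectivity condition holds for every pair $x,y$ with $d(x,y) \leq \rho$, whenever $C\rho \leq \min\{r_D, r_{PI}\}$. This localization is direct because every ball appearing in the proof of the equivalence has radius controlled by a multiple of $d(x,y)$, and the curves produced have length at most $Cd(x,y)$. Next I would run the iterative curve-fragment argument of the self-improvement proof inside a single ball $B(x_0, r)$ with $r \leq r_0$: for pairs $x,y \in B(x_0, r)$ we have $d(x,y) \leq 2r$, and the curve together with all refilled fragments at finer scales of the iteration stays inside a single ball $B(x_0, K_0 C r)$ with the truncated maximal functions $\M_{p,s}g$ evaluated at scale $s$ of order $Cr$. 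Because the iteration has finite depth depending only on $\epsilon$, no cascade of scales beyond a universal multiple of $r$ is introduced.

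The two bounds $r_{PI}/4$ and $r_D/(20C)$ in the statement correspond to two separate bookkeeping tasks. The first reflects that verifying the improved Poincar\'e inequality on $B = B(x_0, r)$ forces an application of the hypothesis $(1,p)$-PI on balls of radius up to roughly $4r$, namely on the ball $2B$ together with an inflation factor absorbed when passing to the local $A_p$-connectivity form. The second reflects the largest ball on which doubling is invoked in the curve-fragment estimates, where the inflation $C$ from $A_p$-connectivity multiplies $r$ and is further multiplied by a universal constant of at most $20$ arising from chaining curve fragments across the levels of the iteration. The main obstacle is precisely this bookkeeping; once the constants are pinned down, the quantitative estimate for $\epsilon(D, p, C_{PI})$ from the global case carries over verbatim, since the iteration only sees the parameters $D$, $p$, $C_{PI}$, and the output constants $C'$ and $C_{PI}'$ retain their stated dependencies on $(D, C_{PI}, \epsilon, C)$ and $(D, p, C_{PI}, \epsilon, C)$ respectively.
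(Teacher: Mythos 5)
Your proposal follows the same route as the paper: localize the characterization of Theorem \ref{thm:classification}, run the $\alpha$-function iteration on a localized version $\alpha^q_{r_0}$, and trace the scales backwards — the factor $4$ in $r_{PI}/4$ coming from the two scale-doublings in the reduction $(1,q)$-PI at scale $r_0$ $\to$ $A_q$/$A_p$-connectivity at scale $2r_0$ $\to$ $(1,p)$-PI at scale $4r_0$, and the factor $20C$ from the largest balls on which doubling is invoked via Lemma \ref{lem:maxmax} in the curve-refilling step. This matches the paper's argument in both structure and in the provenance of the two constraints on $r_0$.
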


We end this introduction with an intuitive, and informal, reason for Theorem \ref{thm:maintheorem} to hold true. This intuition is abstractly present in the proofs of this paper. Given a function $g$, the $A_p$-connectivity from Theorem \ref{thm:classification} implies the existence of curves $\gamma$ connecting $x$ and $y$ with $d(x,y) = r$ a bound of the form
$$\int_{\gamma} g ~ds \leq C_{A} d(x,y)\left( \M_{p,Cr}g(x) + \M_{p,Cr}g(y) \right).$$
However, this bound is not optimal for all $g$. In the case where $g$ is supported on a very small set, or is highly concentrated, then a much better curve can be obtained. Namely, for any $\delta \in (0,1)$, there is an $\epsilon \in (0,1)$ such that if $E = \text{supp}(g)$ and $\M_{p,Cr}1_E(x) + \M_{p,Cr}1_E(y) < \epsilon $, then in fact we could get a bound roughly of the form
$$\int_{\gamma} g ~ds \leq \delta d(x,y)\left( \M_{p,Cr}g(x) + \M_{p,Cr}g(y) \right).$$
Here, the constant $C_A$ can be replaced with the much smaller $\delta$. Thus, highly concentrated ``obstacle'' functions $g$ are in fact easier to avoid. Quantifying this leads to the self-improvement phenomenon, since if $g$ is not highly concentrated, then the $L^q$ and $L^p$-norms become comparable. On the other hand, if $g$ is highly concentrated, then the previous sketch of an argument shows that the curve integrals are much smaller than expected.  \\ \\

\paragraph{\textbf{Acknowledgments}:} I thank my adviser Professor Bruce Kleiner for discussing similar topics, especially in relation to the previous paper \cite{sylvester:poincare}. I also thank Professor Juha Kinnunen and Antti V\"ah\"akangas for discussing their related work in \cite{kinnunen2017maximal} and giving feedback on the presentation of the current paper, and for presenting many interesting problems related to this work. I also thank Nageswari Shanmugalingam for encouraging us to rethink the proofs from an earlier version, which improved the presentation. Finally, the paper has benefited from a careful reading by the referee and his many corrections and comments. This research has been supported by NSF graduate fellowship DGE-1342536 and NSF grant DMS-1704215.

\section{Preliminary lemmas}\label{sec:prelim}
Throughout this paper we will assume that $(X,d,\mu)$ is a proper metric measure space equipped with a Radon measure $\mu$.

By a curve $\gamma \co I \to X$ we mean a continuous function whose domain $I \subset \R$ is compact. The length of an interval $I$ is denoted $|I|$. The length of a curve is defined as
\begin{equation}\label{eq:lendef}\len(\gamma) \defeq \sup_{x_1 \leq \dots \leq x_n \in I} \sum_{i =1}^{n-1} d(\gamma(x_{i+1}),\gamma(x_i)).\end{equation}
A curve $\gamma$ is called \emph{rectifiable} if $\len(\gamma)<\infty$. Most of the time we will focus on \emph{Lipschitz curves}, i.e. those for which there exists a $L \in (0,\infty)$ such that $d(\gamma(a),\gamma(b)) \leq L|b-a|$ for any $a,b \in I$. The smallest $L$ for which this inequality is satisfied is also called the Lipschitz constant of $\gamma$ and is denoted $\LIP(\gamma)$. If $\gamma$ is assumed to be Lipschitz, we have $\len(\gamma) \leq \LIP(\gamma)|I|.$ In fact, any curve can be reparametrized by length as $\gamma^* \co [0,\len(\gamma)] \to X$. This makes the curve $1$-Lipschitz, and such that $\len(\gamma|_{[a,b]}) = |b-a|$ whenever $0<a<b<\len(\gamma)$ \cite{ambrosio2008gradient}. 

For rectifiable curves one can define a curve integral according to \cite{ambrosio2008gradient}, and which is defined for any bounded/signed Borel function. In fact, if $\gamma$ is a rectifiable curve, and $\gamma^* \co [0,\len(\gamma)] \to X$ is its length-reparametrization, the integral can be defined as
$$\int_\gamma g ~ds \defeq \int_0^{\len(\gamma)} g(\gamma^*(t))~dt,$$
when the right-hand side makes sense.

A metric space $(X,d)$ is called \emph{($L-$)quasiconvex} if for every $x,y \in X$, there exists a rectifiable curve $\gamma$ connecting $x$ to $y$ with $\len(\gamma) \leq L d(x,y)$. A space that is $1$-quasiconvex is called geodesic. We recall, that a curve $\gamma \co I \to X$ is said to connect a pair of points $x,y$ if $\gamma(\min(I))=x,\gamma(\max(I))=y$. 

If $f$ is a continuous function on $X$, we call a non-negative Borel function $g$ an \emph{upper gradient} for $f$ if for every $x,y \in X$, and any rectifiable curve $\gamma$ connecting $x$ to $y$ we have
$$|f(x)-f(y)| \leq \int_\gamma g ~ds.$$
This terminology is due to Heinonen and Koskela \cite{heinonen1998quasiconformal}.

We define the localized Hardy-Littlewood maximal functions with exponent $p \in [1,\infty)$ as
$$\M_{p,s} f(x) = \sup_{r \in (0,s]} \left(\vint_{B(x,r)} f^p ~d\mu\right)^\frac{1}{p},$$
which makes sense for any non-negative measurable $f$. The non-localized version is simply 
$$\M_{p} f(x) = \sup_{0<r} \left(\vint_{B(x,r)} f^p ~d\mu\right)^\frac{1}{p}.$$
If $p=1$ we will drop the subscript. Finally, if $A \subset X$, then we denote by $1_A$ the characteristic function, or indicator function, of the set $A$.

We have the following weak $L^1$-distributional inequality. Its proof is contained in \cite{stein2016harmonic}.

\begin{theorem}(Maximal function estimate) Let $(X,d,\mu)$ be a $D$-measure doubling metric measure space and $s>0$ and $B(x,r) \subset X$ arbitrary, then for any $p \in [1,\infty)$ and any non-negative $ f \in L^1$ and $\lambda>0$ we have
$$\mu\left(\{ \M_{p,s} f > \lambda \}\cap B(x,r)\right) \leq D^3\frac{\Vert f 1_{B(x,r+s)} \Vert_{L^p}^p}{\lambda^p}.$$
\end{theorem}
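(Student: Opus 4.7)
The plan is to run the standard Vitali-type covering argument adapted to doubling metric measure spaces. First I set $E = \{\M_{p,s} f > \lambda\} \cap B(x,r)$, and for each $y \in E$ I select a radius $r_y \in (0,s]$ witnessing the maximal function inequality, so that
$$\int_{B(y,r_y)} f^p \, d\mu > \lambda^p \, \mu(B(y,r_y)).$$
Since $y \in B(x,r)$ and $r_y \leq s$, each selected ball satisfies $B(y,r_y) \subset B(x, r+s)$, which is exactly how the truncation $f 1_{B(x,r+s)}$ will enter the final bound.

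Next I would invoke the $5r$-covering lemma, which applies in an arbitrary metric space to any family of balls of uniformly bounded radius (here bounded by $s$), to extract a countable pairwise disjoint subfamily $\{B(y_i, r_{y_i})\}_i$ such that $E \subset \bigcup_i B(y_i, 5 r_{y_i})$. This is the one non-trivial step; it is a greedy/Zorn argument that does not use the measure at all, only the metric structure, and I would consider this the main (and essentially only) obstacle.

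Then I would apply the doubling hypothesis three times. Because $5 \leq 8 = 2^3$, iterating the defining inequality gives
$$\mu(B(y_i, 5 r_{y_i})) \leq \mu(B(y_i, 8 r_{y_i})) \leq D^3 \, \mu(B(y_i, r_{y_i})).$$
Summing over the disjoint subfamily, using the pointwise inequality at each center and the common containment in $B(x, r+s)$, yields
$$\mu(E) \leq \sum_i \mu(B(y_i, 5 r_{y_i})) \leq D^3 \sum_i \mu(B(y_i, r_{y_i})) < \frac{D^3}{\lambda^p} \sum_i \int_{B(y_i, r_{y_i})} f^p \, d\mu \leq \frac{D^3}{\lambda^p} \int_{B(x, r+s)} f^p \, d\mu,$$
which is exactly the claimed bound. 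Everything besides the covering step is a mechanical consequence of doubling and the definition of $\M_{p,s}$; the constant $D^3$ is sharp up to how tightly one bundles the dilation factor with the doubling iteration.
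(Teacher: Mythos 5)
Your proof is correct and is exactly the standard $5r$-covering argument that the paper itself invokes by citation (and mirrors in its proof of the max--max estimate, Lemma \ref{lem:maxmax}): select witness balls $B(y,r_y)$ with $r_y\le s$, note they all lie in $B(x,r+s)$, pass to a disjoint subfamily whose $5$-dilates cover the level set, and pay $D^3$ for the dilation since $5\le 2^3$. No gaps; the only cosmetic caveat is that for an infinite disjoint subfamily the summed strict inequality should be recorded as $\le$, which is all the statement requires.
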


We also need a different type of Maximal function estimate, whose proof is similar to the previous theorem, but with an additional observation. It is a ``multi-scale'' version of the previous inequality.

\begin{lemma}\label{lem:maxmax} (Max-max estimate) Let $(X,d,\mu)$ be $D$-measure doubling and $r,s>0$, $x \in X$ arbitrary. If $f$ is any non-negative measurable function and $E_{\lambda,p,s}  = \{z | \M_{p,s} f (z) > \lambda \}$, then we have
\begin{equation}
\M_{r}1_{E_{\lambda,p,s}}(x) \leq \frac{D^4 \left(\M_{p,s+r} f(x)\right)^p}{\lambda^p}.
\end{equation}
\end{lemma}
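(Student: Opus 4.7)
The plan is to adapt the $5r$-covering argument from the preceding weak-type maximal estimate, with one additional case split at scale $\rho$ designed to absorb the scale-shift $s$ into a single extra doubling factor. Since
$$\M_r 1_{E_{\lambda,p,s}}(x) = \sup_{\rho \in (0,r]} \frac{\mu(E_{\lambda,p,s} \cap B(x,\rho))}{\mu(B(x,\rho))},$$
it suffices to fix $\rho \in (0,r]$, set $\beta = (\M_{p,s+r}f(x))^p/\lambda^p$, and prove the pointwise bound $\mu(E_{\lambda,p,s}\cap B(x,\rho))/\mu(B(x,\rho)) \leq D^4\beta$.

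In Case A---when either $\rho \geq s$, or every $z \in E_{\lambda,p,s}\cap B(x,\rho)$ admits a witness ball $B(z,t_z)$ of radius $t_z \leq \min(\rho,s)$ with $\vint_{B(z,t_z)} f^p\,d\mu > \lambda^p$---I would run the standard Vitali extraction on these witness balls, losing a factor $D^3$ from the fivefold enlargements. Every disjoint $B_i$ then lies in an enclosing ball of radius $R = \rho + \min(\rho,s) \leq \min(2\rho, s+r)$; summing the defining inequalities and using $R \leq s+r$ gives $\sum_i \mu(B_i) \leq \beta \mu(B(x,R))$, while $R \leq 2\rho$ together with one application of doubling absorbs $\mu(B(x,R)) \leq D\mu(B(x,\rho))$. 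Combining with the $D^3$ from the covering yields the required $D^4\beta\mu(B(x,\rho))$ bound.

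In Case B---$\rho < s$ and some $z \in E_{\lambda,p,s}\cap B(x,\rho)$ admits witnesses only for radii $t_z \geq \rho$---I would bypass the covering and compare averages directly. Picking such a $t_z \in [\rho,s]$, the inclusion $B(x,\rho+t_z) \subseteq B(z,4t_z)$ (which uses $\rho \leq t_z$) combined with two applications of doubling gives
$$(\M_{p,s+r}f(x))^p \geq \vint_{B(x,\rho+t_z)} f^p\,d\mu \geq \frac{1}{D^2}\vint_{B(z,t_z)} f^p\,d\mu > \frac{\lambda^p}{D^2}.$$
Hence $\beta > 1/D^2$, so $D^4\beta > D^2 \geq 1$, and the trivial density bound $\mu(E_{\lambda,p,s}\cap B(x,\rho))/\mu(B(x,\rho)) \leq 1$ already suffices.

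The principal obstacle is precisely Case B. A naive Vitali argument at scale $\rho$ only places the witness balls inside $B(x,\rho+s)$, and the resulting doubling factor $\mu(B(x,\rho+s))/\mu(B(x,\rho))$ grows like $D^{\log_2((\rho+s)/\rho)}$ when $s \gg \rho$, destroying the $D^4$ constant. The observation that rescues the argument---and which is the genuine additional content beyond the previous theorem---is that the mere presence of a witness ball inside $B(x,\rho)$ of radius at least $\rho$ already pins $\M_{p,s+r}f(x)$ to size comparable to $\lambda$, at which point the trivial density bound $\leq 1$ is good enough.
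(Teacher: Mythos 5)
Your proposal is correct and follows essentially the same route as the paper's proof: the same dichotomy between small and large witness balls, the same $5r$-covering argument contributing $D^3$ plus one further doubling from the enclosing ball of radius $\rho+\min(\rho,s)\le 2\rho$, and the same two-doubling comparison in the large-witness case. The only difference is presentational: the paper phrases the large-witness case as a contradiction with the a priori reduction $\lambda^p > D^4(\M_{p,s+r}f(x))^p$, whereas you phrase it as the trivial density bound being sufficient, which is logically the same observation.
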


\begin{proof} Fix $x$ and $t \in (0,r)$ be arbitrary. Without loss of generality, assume $\infty > \lambda^p > D^4 \left(\M_{p,s+r} f(x) \right)^p$. Were this to fail, the estimate would become trivial (as the left hand side is bounded by $1$). We will estimate for the ball $B(x,t) \subset X$
$$\vint_{B(x,t)} 1_{E_{\lambda,p,s}} ~d\mu = \frac{\mu(E_{\lambda,p,s} \cap B(x,t))}{\mu(B(x,t))},$$
by the right hand side. The proof then follows by taking the supremum over $t\in (0,r]$.

Consider $A = B(x,t) \cap E_{\lambda,p,s}$. For every $z \in A$ there exists a ball $B(z,r_z)$ such that $r_z \in (0,s]$ and 
$$\vint_{B(z, r_z)} f^p ~d\mu > \lambda^p.$$
There are two cases. Either, for every $z \in A$ we have $r_z < t$, or there exists some such that $r_z \geq t$. If the latter case holds, then 
$$\vint_{B(x,r_z + t)} f^p \geq \frac{1}{D^2} \vint_{B(z,r_z)} f^p ~d\mu \geq \frac{\lambda^p}{D^2}.$$
But, from $r_z + t \leq s+r$ we would get $(\M_{p,s+r} f)^p \geq \frac{\lambda^p}{D^2}$, which gives a contradiction. So, for every $z$ we have $r_z < t$.

Using the 5-covering Lemma (see \cite{stein2016harmonic}), we obtain a collection of balls $\mathcal{B}=\{B(z_i, r_i)\}$ such that $B(z_i, r_i)$ are disjoint, so that $B(z_i, 5r_i)$ cover the set $A$, $z_i \in B(x,t)$, $r_i \in (0,\min(s,t)]$ and
$$\vint_{B(z_i,r_i)} f^p ~d\mu > \lambda^p.$$
Then, we get
\begin{eqnarray*}
\frac{\mu(E_{\lambda,p,s} \cap B(x,t))}{\mu(B(x,t))} &\leq& \frac{\sum_{B \in \mathcal{B}} \mu(5B)}{\mu(B(x,t))}\\
&\leq& D^3\frac{\sum_{B \in \mathcal{B}} \mu(B)}{\mu(B(x,t))}\\
&\leq& \frac{D^4}{\lambda^p} \frac{\sum_{B \in \mathcal{B}} \int_{B} f^p ~d\mu}{\mu(B(x,t+\min(s,t)))} \\
&\leq& \frac{D^4}{\lambda^p} \frac{\int_{B(x,t+\min(s,t))} f^p ~d\mu}{\mu(B(x,t+\min(s,t)))} \\
&\leq& \frac{D^4}{\lambda^p} \vint_{B(x,t+\min(s,t))} f^p ~d\mu \leq \frac{D^4  \left(\M_{p,s+r} f(x)\right)^p}{\lambda^p}.
\end{eqnarray*}
\end{proof}

\section{Proof of Self-improvement}\label{sec:connectivity}

We will use the following definition of $A_p$-connectivity.

\begin{definition}\label{def:apconn}Let $C,C_A>0, p \geq 1$. We say that a metric measure space $(X,d,\mu)$ is {\sc $A_p$-connected (with constants $(C,C_A)$)} if for every  $x,y \in X$ with $d(x,y)=r>0$, and every lower semi-continuous and non-negative $g \co X \to [0,\infty)$, there exists a $L>0$ and a Lipschitz curve $\gamma \co [0,L]\to X$ such that

\begin{enumerate}
\item $\gamma(0)=x$,
\item $\gamma(L)=y$,
\item $\len(\gamma) \leq Cr$ and
\item 
\begin{equation}\label{eq:apconnI}
\int_\gamma g \leq C_A r \left(\M_{p,Cr}g(x)+\M_{p,Cr}g(y)\right).
\end{equation}
\end{enumerate}
\end{definition}

We choose the term $A_p$-connected to draw an analogy to the definition of $A_p$-weights. Recall, that the class of $A_p$-weights is defined by $\mu \in A_p(\lambda)$, where $\lambda$ is Lebesgue measure on $\R^n$ and $d\mu = \omega ~d\lambda$, if one of the following equivalent conditions holds.

\begin{enumerate}
\item Maximal function bound: There is a constant $C>0$ such that for every $f \in L^p(\mu)$ we have
\begin{equation}\label{eq:muckmax}
\left( \int (\M f)^p ~d\mu \right)^{\frac{1}{p}} \leq C \left( \int f^p ~d\mu \right)^{\frac{1}{p}}.
\end{equation}
\item Integral bound: $\mu=\omega \lambda$, where $\omega, \omega^{1-p}$ are locally integrable and there is a $C>0$ such that for every ball $B=B(x,r)$
\begin{equation} \label{eq:muckint}
\left(\vint_B \omega ~d\lambda\right)\left(\vint_B \omega^{1-p} ~d\lambda \right)^{\frac{1}{p-1}} \leq  C.
\end{equation}

\item Average bound: For some $C>0$ and for any $f$ locally integrable and any ball $B=B(x,r)$
\begin{equation}\label{eq:muckav}
\vint_B f ~d\lambda \leq C \left(\frac{1}{\mu(B)}\int_B f^p  ~d\mu \right)^{\frac{1}{p}}.
\end{equation}
\end{enumerate}

Further, all of these imply that a version of quantitative absolute continuity holds. By this, we mean that there is a constant $C>0$ such that for all $B(x,r)$ and all $E \subset B(x,r)$ we have
\begin{equation}\label{eq:muckset}
\frac{\lambda(E)}{\lambda(B(x,r))} \leq C \left(\frac{\mu(E)}{\mu(B(x,r))}\right)^{\frac{1}{p}}.
\end{equation}

It is subtle, that this quantitative absolute continuity is \emph{not} equivalent to being an $A_p$-weight. In fact, by work in \cite{kurtz1982, lerner2007} the condition \eqref{eq:muckset} characterizes so called $A_{p,1}$-weights. While the $A_p$-conditions characterize boundedness of the Hardy-Littlewood maximal function $\M$ from $L^p$ to $L^p$, the $A_{p,1}$-condition characterizes boundedness from $L^p \to L^{p,\infty}$. It is known, that $A_p \subset A_{p,1}$ strictly. Further, the $A_{p,1}$-condition does not improve to $A_{q,1}$ for any $q<p$. 

Our definition of $A_p$-connected is analogous to the average bound \eqref{eq:muckav}. Namely, replace $\lambda$ by $\mathcal{H}^1|_\gamma$ and the right-hand side by a maximal function bound. The measure $\mathcal{H}^1_\gamma$  is the 1-dimensional Hausdorff measure on the image of $\gamma$. The formal difference is that the condition of $A_p$-connectivity additionally presumed the \emph{existence} of some curve $\gamma$ such that the estimate holds. In a sense, the $A_p$-connectivity corresponds to being an  ``$A_p$-weight'' with respect to one-dimensional Hausdorff measure on some curve. 

The condition \eqref{eq:muckset} is somewhat similar to the notion of fine connectivity in \cite{sylvester:poincare}. It would correspond to restricting functions $g$ in the definition of $A_p$-connectivity, with characteristic functions $g=1_E$. However, we do not need to use that definition here.

At the heart of our proof of self-improvement is the characterization of Poincar\'e inequalities in terms of $A_p$-connectivity. We first need some elementary lemmas. 

\begin{lemma}\label{lem:approxset} Let $E$ be a Borel set, $p \in [1,\infty)$ and $s>0$ a fixed scale parameter. Then, for every $\epsilon \in (0,1)$ there exists an open set $O$ such that $E \setminus \{x\} \subset O$, $x \not\in O$ and such that
\[\M_{s}1_O(x) \leq \M_{s}1_E(x)+\epsilon,\]
and
\[\M_{p,s}1_{O\setminus E}(x) \leq \epsilon.\]
\end{lemma}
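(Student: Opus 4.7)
My plan is first to reduce to a single estimate and then build $O$ by a scale-adapted outer-regularity approximation. For the reduction: since $x\notin O$ and $E\setminus\{x\}\subset O$ force $O\cap E=E\setminus\{x\}$, for every $r\in(0,s]$
\[\frac{\mu(O\cap B(x,r))}{\mu(B(x,r))}\le\frac{\mu(E\cap B(x,r))}{\mu(B(x,r))}+\frac{\mu((O\setminus E)\cap B(x,r))}{\mu(B(x,r))}.\]
Because $1_A^p=1_A$ for any set $A$, the last term equals $(\M_{p,s}1_{O\setminus E}(x))^p$, and $\epsilon^p\le\epsilon$ for $\epsilon\in(0,1)$ and $p\ge 1$. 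Both conclusions of the lemma therefore follow from the single estimate $\M_{p,s}1_{O\setminus E}(x)\le\epsilon$. Equivalently, I need an open $O\supset E\setminus\{x\}$ with $x\notin O$ satisfying
\[\mu((O\setminus E)\cap B(x,r))\le\epsilon^p\mu(B(x,r))\quad\text{for every }r\in(0,s].\]

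For the construction, the main obstacle is that a single outer-regular approximation $\mu(O\setminus E)<\eta$ cannot dominate $\epsilon^p\mu(B(x,r))$ uniformly, since $\mu(B(x,r))$ may vanish as $r\to 0$. I would refine the approximation scale-by-scale in dyadic annuli around $x$. Put $r_k=2^{-k}s$ and $F_k=(E\cap B(x,2s))\setminus B(x,r_{k+1})$; each $F_k$ is Borel with finite $\mu$-measure and is disjoint from $\overline{B(x,r_{k+2})}$. By outer regularity of the Radon measure $\mu$, pick an open $V_k\supset F_k$ with $V_k\subset X\setminus\overline{B(x,r_{k+2})}$ (by intersecting the cover with this open set if necessary) and
\[\mu(V_k\setminus E)\le\delta_k:=\frac{\epsilon^p}{2^{k+3}}\mu(B(x,r_{k+2})).\]
Set $O=\bigl(\bigcup_{k\ge 0}V_k\cup(X\setminus\overline{B(x,s)})\bigr)\setminus\{x\}$. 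Then $O$ is open (singletons are closed), avoids $x$, and covers $E\setminus\{x\}$: points of $E\cap B(x,2s)\setminus\{x\}$ lie in some $V_k$ because $\bigcap_k B(x,r_{k+1})=\{x\}$, and points of $E$ outside $B(x,2s)$ lie in $X\setminus\overline{B(x,s)}$.

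To verify the bound, fix $r\in(0,s]$. The outer piece never meets $B(x,r)$ since $B(x,r)\subset\overline{B(x,s)}$, and $V_k\cap B(x,r)=\emptyset$ unless $r_{k+2}<r$, in which case $B(x,r_{k+2})\subset B(x,r)$. Hence
\[\mu((O\setminus E)\cap B(x,r))\le\sum_{k:\,r_{k+2}<r}\delta_k\le\epsilon^p\mu(B(x,r))\sum_{k:\,r_{k+2}<r}2^{-(k+3)}\le\epsilon^p\mu(B(x,r)),\]
because the geometric tail sums to at most $r/s\le 1$. Taking the $p$-th root and the supremum over $r$ yields $\M_{p,s}1_{O\setminus E}(x)\le\epsilon$, which together with the reduction closes the argument. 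The essential point is the scale-adapted choice $\delta_k\propto 2^{-k}\mu(B(x,r_{k+2}))$, precisely calibrated to offset both the geometric multiplicity of scales meeting $B(x,r)$ and the potential decay of $\mu(B(x,r))$ near $x$.
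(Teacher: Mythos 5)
Your proof is correct and follows essentially the same route as the paper's: a dyadic decomposition of $E$ around $x$, outer regularity applied at each scale with an error calibrated to $\epsilon^p 2^{-k}\mu(B(x,2^{-k-2}s))$, and a geometric summation over the scales whose approximating open sets can meet $B(x,r)$. The only cosmetic differences are that you use nested truncations $F_k$ plus a far piece where the paper uses dyadic annuli indexed over $\Z$, and that you cleanly reduce both conclusions to the single estimate on $\M_{p,s}1_{O\setminus E}(x)$.
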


\begin{proof}
 Fix $\epsilon>0$. By regularity of measure, for each $n \in \Z$ we can find open sets $O_{\epsilon,n}$ such that $E \cap (B(x,2^{1-n}s) \setminus B(x,2^{-n-1}s)) \subset O_{\epsilon,n}$, $O_{\epsilon,n} \subset B(x,2^{2-n}s) \setminus B(x,2^{-n-2}s)$ and $\mu(O_{\epsilon,n} \setminus E \cap B(x,2^{2-n}s)) \leq \epsilon^p 4^{-n-1} \mu(B(x,2^{-n-2}s))$. Define $O = \bigcup_{n=0}^\infty O_{\epsilon,n}$. It is clear that $x \not\in O$ Now, clearly
 $$E \cap B(x,s) \setminus \{x\}  = \bigcup_{n \in \Z} E \cap (B(x,2^{1-n}s) \setminus B(x,2^{-n-1}s)) \subset \bigcup_{n \in \Z} O_{\epsilon,n}=O.$$
 
 Also, for any $t \in (0,s]$ we have
 \begin{eqnarray*}
    \vint_{B(x,t)} 1_{O\setminus E} ~d\mu &\leq& \frac{1}{\mu(B(x,t))} \sum_{n \in \Z, 2^{-n-2}s<t} \mu(O_{\epsilon,n} \setminus E \cap B(x,2^{2-n}s)) \\
    &\leq& \epsilon^p,
 \end{eqnarray*}
 which gives the second estimate in the statement of the Lemma. Similarly, the first statement follows from the following estimate.
 \begin{eqnarray*}
    \vint_{B(x,t)} 1_O ~d\mu &\leq& \vint_{B(x,t)} 1_E + 1_{O\setminus E} ~d\mu = \M_s 1_E(x) + \frac{1}{\mu(B(x,t))} \sum_{n \in \Z, 2^{-n-2}s<t} \mu(O_{\epsilon,n} \setminus E \cap B(x,2^{2-n}s)) \\
    &\leq& \M_s 1_E(x)+\epsilon^p \leq \M_s 1_E(x)+\epsilon.
 \end{eqnarray*}

\end{proof}

\begin{lemma}\label{lem:approx} Let $g$ be a non-negative Borel function such that $\M_{p,s}g(x)<\infty$. Then, for every $\epsilon \in (0,1)$ there exists an lower semi-continuous function $\overline{g}(y)$ such that $\overline{g}(y) \geq g(y)$ for all $y \neq x$ such that
\[\M_{p,s}\overline{g}(x) \leq \M_{p,s}g(x)+\epsilon.\]
\end{lemma}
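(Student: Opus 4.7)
The plan is to approximate $g$ from above by discretizing it into additive level sets, applying Lemma~\ref{lem:approxset} to each set with a geometrically decaying tolerance, and then summing the resulting open-set indicators. Concretely, I will set $\epsilon_1 := \epsilon/2$ and consider the nested Borel superlevel sets $E_k = \{y : g(y) > k\epsilon_1\}$ for $k = 0, 1, 2, \ldots$. Summing indicator functions yields the pointwise identity
$$\epsilon_1 \sum_{k=0}^\infty 1_{E_k}(y) = \epsilon_1 \lceil g(y)/\epsilon_1 \rceil \in [g(y), g(y) + \epsilon_1]$$
(valid wherever $g(y) < \infty$), which reduces the problem to approximating each $1_{E_k}$ by the indicator of an open set missing $x$ with a summable total error.

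Next, I apply Lemma~\ref{lem:approxset} to each Borel set $E_k$ with parameter $\delta_k := 2^{-k-1}$ to obtain open sets $O_k \supseteq E_k \setminus \{x\}$ with $x \notin O_k$ and $\M_{p,s}1_{O_k \setminus E_k}(x) \leq 2^{-k-1}$. The candidate approximant is
$$\overline{g}(y) := \epsilon_1 \sum_{k=0}^\infty 1_{O_k}(y) \quad (y \neq x), \qquad \overline{g}(x) := 0.$$
Since each $1_{O_k}$ is lower semi-continuous and the non-negative partial sums increase pointwise to $\overline{g}$, the function $\overline{g}$ is itself lower semi-continuous on $X$ (possibly $+\infty$-valued near $x$, which is allowed). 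For $y \neq x$, the inclusion $O_k \supseteq E_k \setminus \{x\}$ together with the identity above gives $\overline{g}(y) \geq \epsilon_1 \sum_k 1_{E_k}(y) \geq g(y)$, as required.

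It remains to bound $\M_{p,s}\overline{g}(x)$. For $y \neq x$, I split $\overline{g}(y) \leq g(y) + \epsilon_1 + \epsilon_1 \sum_k 1_{O_k \setminus E_k}(y)$ and apply Minkowski's inequality on the probability space $(B(x,r), \mu/\mu(B(x,r)))$ for any $r \in (0,s]$. Bounding each term by the corresponding maximal function yields
$$\left(\vint_{B(x,r)} \overline{g}^p \, d\mu\right)^{1/p} \leq \M_{p,s}g(x) + \epsilon_1 + \epsilon_1 \sum_{k=0}^\infty \M_{p,s}1_{O_k \setminus E_k}(x) \leq \M_{p,s}g(x) + 2\epsilon_1 = \M_{p,s}g(x) + \epsilon,$$
and taking the supremum over $r \in (0,s]$ completes the proof. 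The main technical point is the geometric choice $\delta_k = 2^{-k-1}$: it forces the set-approximation errors to sum to a finite constant, which is what allows the construction to absorb infinitely many nontrivial level sets $E_k$ into a single $\epsilon$.
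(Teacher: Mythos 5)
Your proposal is correct and follows essentially the same route as the paper: decompose $g$ into the additive superlevel sets $E_k=\{g>k\epsilon/2\}$, replace each by an open set from Lemma~\ref{lem:approxset} with geometrically decaying error $2^{-k-1}$ (the paper uses $\epsilon 2^{-k-2}$), and sum the errors via Minkowski's inequality. The only differences are in bookkeeping of the constants.
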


\begin{proof}
 Let $E_{k,\epsilon} = \{y \in X \setminus \{x\} | g(y) >\frac{k\epsilon}{2}\}$ for $k \geq 0$. Clearly, for $y \neq x$ we have 
 \[ g(y) \geq \sum_{k=0}^\infty \frac{\epsilon}{2} 1_{E_{k,\epsilon}} - \frac{\epsilon}{2}.\]
 By Lemma \ref{lem:approxset} we have sets $O_{k,\epsilon}$ such that $E_{k,\epsilon} \subset O_{k,\epsilon}$ and
\[\M_{p,s}1_{O_{k,\epsilon}\setminus E_{k,\epsilon}}(x) \leq \epsilon 2^{-k-2}.\]
 
 Finally, define
 $$g_\epsilon = \sum_{k=0}^\infty \frac{\epsilon}{2} 1_{O_{k,\epsilon}}.$$
 Now, it is easy to obtain that $g_\epsilon \geq g$, except possibly at $x$. Finally, we also have for all $y \neq x$ 
 
 \[0<g_\epsilon(y) - g(y) \leq \frac{\epsilon}{2} + \sum_{k=0}^\infty \frac{\epsilon}{2} 1_{O_{k,\epsilon} \setminus E_{k,\epsilon}}(y).\]
 Thus, from the triangle inequality, we can derive
 \begin{eqnarray*}
   \M_{p,s}g_\epsilon &\leq& \M_{p,s}g + \M_{p,s}(g_\epsilon-g) \leq \M_{p,s}g + \frac{\epsilon}{2} + \sum_{k=0}^\infty \M_{p,s} 1_{O_{k,\epsilon} \setminus E_{k,\epsilon}} \\
   &\leq & \M_{p,s}g + \epsilon.
 \end{eqnarray*}
 
\end{proof}

\begin{remark} \label{rmk:multiplepoints}
 If we have a finite set of points $z_1, \dots, z_n$, then we can choose the lower-semi-continuous approximant $\overline{g}$ so that $\overline{g}(y) \geq g(y)$ for all $y \neq z_1, \dots, z_n$ and $\M_{p,s}\overline{g}(z_i) \leq \M_{p,s}g(z_i)+\epsilon$ for all $i = 1, \dots, n$. Namely, apply the lemma to give functions $\overline{g}_{z_i}$ that satisfy the conclusion for $x=z_i$, and define $\overline{g}=\min_{i = 1, \dots, n}\overline{g}_{z_i}$. Similarly, for Lemma \ref{lem:approxset}, we can ensure $\M_{s}1_O(x) \leq \M_{s}1_E(x)+\epsilon$ and $z_i \not\in O$ simultaneously for a finite set of points $x=z_1, \dots, z_n$ by considering the intersection of open sets $O_{z_i}$ satisfying the conlusion for individual $x=z_i$.
\end{remark}

\begin{proof}[ Proof of Theorem \ref{thm:classification}]
That $PI_p \Leftrightarrow PtPI_p$ follows from a classical result, which is presented for example in \cite[Lemma 5.15]{heinonen1998quasiconformal} combined with \cite[Theorem 2]{keith2003modulus}\footnote{Keith's result is also needed, since Heinonen and Koskela \cite{heinonen1998quasiconformal} use a slightly different definition of a Poincar\'e inequality.}. 

Next, we show that $A_pC \Rightarrow PtPI_p$. Let $g$ be a measurable upper gradient of a continuous function $f$. Then, using Lemma \ref{lem:approx} and Remark \ref{rmk:multiplepoints} we can find a $\overline{g_\epsilon}$ which is lower semi-continuous, $\overline{g_\epsilon} \geq g$ (except possibly at $x,y$) and $$\lim_{\epsilon \to 0}\M_{p,Cr}\overline{g_\epsilon}(x) = \M_{p,Cr}g(x), \lim_{\epsilon \to 0}\M_{p,Cr}\overline{g_\epsilon}(y) = \M_{p,Cr}g(y).$$

Then, for any rectifiable curve $\gamma$ parametrized by length, connecting a pair of points $x,y \in X$, we have
$$|f(x)-f(y)| \leq \int_\gamma g ~ds \leq \int_{\gamma} \overline{g_\epsilon} ~ds.$$

So, infimizing over curves $\gamma$ gives
$$|f(x)-f(y)| \leq C_{A} d(x,y) \left( \M_{p,Cr}\overline{g_\epsilon (x)} + \M_{p,Cr}\overline{g_\epsilon} (y) \right),$$
and then letting $\epsilon$ tend to zero gives the desired conclusion.

It remains to show that $PtPI_p$ and $PI_p$ imply $A_pC$. Assume that $(X,d,\mu)$ satisfies a $(1,p)$-Poincar\'e inequality and $PtPI_p$, and let $g$ be an arbitrary non-negative lower semi-continuous function such that $g^p$ is locally integrable and fix $x,y \in X$. To fix constants, assume the Poincar\'e inequality in the form
$$\vint_B |f-f_B| d\mu \leq C_{PI}r  \left(\vint_{CB} (\Lip f)^p d\mu \right)^{\frac{1}{p}}$$
and the second condition as 
\begin{equation}\label{eq:PPI}
 |f(x)-f(y)| \leq C_{PPI}d(x,y)\left(\M_{p,Cr}g_f(x)+\M_{p,Cr}g_f(y)\right),
\end{equation}
if $g_f$ is an upper gradient for $f$.

We will construct $\gamma$ such that 
$$\len(\gamma) \leq 5C_{PPI}d(x,y)$$
and
$$\int_\gamma g ~ds \leq 4C_{PPI} d(x,y) \left(\M_{p,Cr}g(x)+\M_{p,Cr}g(y)\right).$$
Next, define for every $N>0$ and $\epsilon>0$ a function $g_{N,\epsilon} = \min(g+\epsilon,N)$. Then
\begin{equation} 
\M_{p,Cr}g_{N,\epsilon}(x)+\M_{p,Cr}g_{N,\epsilon}(y) \leq  \M_{p,Cr}g(x)+ \M_{p,Cr}g(y) + 2\epsilon. 
\end{equation}

Now, define for $z \in X$ the set $\Gamma_{x,z}$ as the set of all rectifiable curves starting at $x$ and ending at $z$. Further, define a function by
\begin{equation}
\mathcal{F}_{N,\epsilon}(z) = \inf_{\gamma \in \Gamma_{x,z}} \int_{\gamma} g_{N,\epsilon} ~ds.
\end{equation}
This function is bounded and continuous, since PI-spaces are $L$-quasiconvex for some $L=L(C_{PI},D)$  (see e.g. \cite[Theorem 4.32]{bjorn2011nonlinear}, or \cite[Appendix]{ChDiff99}). It is also easy to see that $g_{N,\epsilon}$ is an upper gradient for $\mathcal{F}_{N,\epsilon}$. Next, by the $PtPI_p$-condition we have
\begin{equation}
\left|\mathcal{F}_{N,\epsilon}(y) -\mathcal{F}_{N,\epsilon}(x)\right| \leq C_{PPI}d(x,y)\left(\M_{p,Cr}g(x)+ \M_{p,Cr}g(y)\right) + 2C_{PPI}d(x,y)\epsilon. 
\end{equation}

Thus, there is a curve $\gamma_{N,\epsilon}$ such that $\gamma_{N,\epsilon}$ connects $x$ to $y$ and
$$\int_{\gamma_{N,\epsilon}} g_{N,\epsilon} ~ds \leq C_{PPI}d(x,y)\left(\M_{p,Cr}g(x)+\M_{p,Cr}g(y)\right) + 3C_{PPI}d(x,y)\epsilon.$$

Assume now $\epsilon > \M_{p,Cr}g(x)+\M_{p,Cr}g(y)$ is arbitrary. Then since $g_{N,\epsilon} \geq \epsilon$, we get
\begin{eqnarray}
\epsilon \len(\gamma_{N,\epsilon}) &\leq& \int_{\gamma_{N,\epsilon}} g_{N,\epsilon}~ds \nonumber \\
&\leq& 2C_{PPI}d(x,y)\left(\M_{p,Cr}g(x)+\M_{p,Cr}g(y)\right) + 3C_{PPI}d(x,y)\epsilon \nonumber \\
&\leq& 5C_{PPI}d(x,y)\epsilon.
\end{eqnarray}

Thus $\len(\gamma_{N,\epsilon}) \leq 5C_{PPI}d(x,y)$. Assume that $\gamma_{N,\epsilon}$ are parametrized by length. Then, they are $1$-Lipschitz and the properness of $X$ allows us to apply Arzela-Ascoli, and to extract a subsequential limit curve $\gamma_\epsilon$ connecting $x$ to $y$. Up to reindexing, we can assume that the curve is the limit of the original sequence. 
Then for every $N$, using lower semi-continuity of curve integrals and the lower semi-continuity of $g_{N,\epsilon}$ (see \cite[Proposition 4]{keith2003modulus}, we get
\begin{eqnarray*}
\int_{\gamma_\epsilon} g_{N,\epsilon} ~ds &\leq& \liminf_{M \to \infty} \int_{\gamma_{M, \epsilon}} g_{N,\epsilon} ~ds \\
&\leq& \liminf_{M \to \infty} \int_{\gamma_{M,\epsilon}} g_{M,\epsilon} \\
&\leq& C_{PPI}d(x,y)\left(\M_{p,Cr}g(x)+\M_{p,Cr}g(y)\right) + 3C_{PPI}d(x,y)\epsilon.
\end{eqnarray*}

Now, letting $N \to \infty$ and using monotone convergence, we get
\begin{equation}
\int_{\gamma_{\epsilon}} g ~ds \leq C_{PPI}d(x,y)\left(\M_{p,Cr}g(x)+\M_{p,Cr}g(y)\right) + 3C_{PPI}d(x,y)\epsilon.
\end{equation}

Let $\epsilon \to \left(\M_{p,Cr}g(x)+\M_{p,Cr}g(y)\right)$. Then, by using Arzela-Ascoli again we obtain a sub-sequential limit $\gamma$ of $\gamma_{\epsilon}$. Finally, using lower semi-continuity of $g$ and the lower semi-continuity of curve integrals we get $\len(\gamma) \leq 5C_{PPI}d(x,y)$ and the desired estimate
$$\int_{\gamma} g ~ds \leq 4C_{PPI}d(x,y)\left(\M_{p,Cr}g(x)+\M_{p,Cr}g(y)\right).$$

We remark, that this final limiting process is only necessary if $\M_{p,Cr}g(x)+\M_{p,Cr}g(y)=0$. Otherwise, we could just set $\epsilon = \M_{p,Cr}g(x)+\M_{p,Cr}g(y).$

\end{proof}

Next, we present our proof of Keith-Zhong self-improvement. Some notation and ideas are similar to \cite{lerner2007}, where the authors show general self-improvement phenomena for Maximal-function estimates. There, a crucial role is played by a sub-multiplicative function $\alpha$. For us, the relevant quantity is the following.

Let $x,y \in X$ be given and denote $r=d(x,y)$. Define with
\begin{equation}\label{eq:obsdef}
\mathcal{E}^p_{x,y,\tau,C} \defeq \{~g \co X \to [0,1]~| ~g \text{ lower semi-continous } \M_{p,Cr}g (x) + \M_{p,Cr}g (y) < \tau~\}
\end{equation} 
the set of admissible obstacle functions. Denote by $\Gamma_{x,y}^C$ the set of rectifiable curves $\gamma$ parametrized by length on the interval $[0,\len(\gamma)]$ such that $\gamma(0)=x, \gamma(\len(\gamma))=y$ and $\len(\gamma) \leq Cd(x,y)$. Then define
\begin{equation}\label{alphaindex}
\alpha^p(C,\tau) \defeq\sup_{x,y \in X} \sup_{g \in \mathcal{E}^p_{x,y, \tau,C} } \inf_{\gamma \in \Gamma_{x,y}^C} \frac{1}{d(x,y)}\int_\gamma g ~ds.
\end{equation}

In a sense, $\alpha^p(C,\tau)$ measures how well a function $g$ with ``small'' size can block curves, for the worst scale $d(x,y)$ and worst pair of points $x,y \in X$. The additional constraint on $g$ to have values in $[0,1]$ is used to ensure that $\alpha^p$ is bounded. Namely, if $X$ is $L$-quasiconvex with $L \leq C$, then for every $p \in [1,\infty)$
$$\alpha^p(C,\tau) \leq L,$$
for all $\tau \in [0,1]$. In this case, one can estimate the infimum from above by an arbitrary curve $\gamma$ connecting $x,y$ with length $\len(\gamma) \leq Ld(x,y)$, and obtain
$$\frac{1}{d(x,y)}\int_\gamma g ~ds \leq \frac{Ld(x,y)}{d(x,y)} \leq L.$$

While initially non-intuitive, this expression is a way of condensing the $A_p$-connectivity property. 

\begin{lemma} \label{lem:apchar} Let $p \in [1,\infty)$. The space $X$ is $A_p$-connected with constants $(C,C_A)$ if and only if
$$\alpha^p(C,\tau) \leq C_A \tau,$$
for all $\tau \in [0,1]$.
\end{lemma}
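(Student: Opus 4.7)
The forward implication is essentially a matter of unpacking definitions. Given $x, y \in X$ with $r = d(x,y)$ and any obstacle $g \in \mathcal{E}^p_{x,y,\tau,C}$, the $A_p$-connectivity hypothesis produces a curve $\gamma \in \Gamma^C_{x,y}$ with $\int_\gamma g \, ds \leq C_A r(\M_{p,Cr}g(x) + \M_{p,Cr}g(y)) < C_A r \tau$; dividing by $r$ and then taking the infimum over $\gamma$ and suprema over $g$, $x$, $y$ directly yields $\alpha^p(C,\tau) \leq C_A \tau$.

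For the reverse direction the obstacle is that $\alpha^p$ only controls functions taking values in $[0,1]$, whereas $A_p$-connectivity must hold for an arbitrary non-negative lower semi-continuous $g$. My plan is to pass to truncated rescalings of $g$, apply the $\alpha$-bound at each truncation level, and then recover a single curve by a compactness argument. Fix $x, y \in X$, let $r = d(x,y)$, and set $M := \M_{p,Cr}g(x) + \M_{p,Cr}g(y)$. I will first assume $M > 0$; the degenerate case $M = 0$ will be handled identically but with a vanishing right-hand side.

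For each integer $N$ I would define $g_N := \min(g, N)$ and $\tilde g_N := g_N/N$, so that $\tilde g_N$ is lower semi-continuous with values in $[0,1]$ and $\M_{p,Cr}\tilde g_N(x) + \M_{p,Cr}\tilde g_N(y) \leq M/N$. Consequently $\tilde g_N \in \mathcal{E}^p_{x,y,\tau,C}$ for every $\tau > M/N$, so the hypothesis gives
$$\inf_{\gamma \in \Gamma^C_{x,y}} \frac{1}{r} \int_\gamma \tilde g_N \, ds \leq \inf_{\tau > M/N} \alpha^p(C,\tau) \leq C_A \frac{M}{N}.$$
Picking $\gamma_N \in \Gamma^C_{x,y}$ within $1/N^2$ of this infimum and rescaling by $N$ then yields $\int_{\gamma_N} g_N \, ds \leq C_A M r + r/N$, while $\len(\gamma_N) \leq Cr$ by membership in $\Gamma^C_{x,y}$.

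To produce a single limit curve, I would reparametrize each $\gamma_N$ by arclength and extend by the constant value $y$ to the fixed interval $[0,Cr]$, giving a sequence of $1$-Lipschitz maps. Properness of $X$ together with Arzela-Ascoli then yields a uniformly convergent subsequence with $1$-Lipschitz limit $\gamma \in \Gamma^C_{x,y}$. For each fixed $N$, the lower semi-continuity of curve integrals for lower semi-continuous integrands from \cite[Proposition 4]{keith2003modulus} combined with the monotonicity $g_N \leq g_M$ for $M \geq N$ gives
$$\int_\gamma g_N \, ds \leq \liminf_{M \to \infty} \int_{\gamma_M} g_N \, ds \leq \liminf_{M \to \infty} \int_{\gamma_M} g_M \, ds \leq C_A M r.$$
Monotone convergence as $N \to \infty$ then upgrades this to $\int_\gamma g \, ds \leq C_A r(\M_{p,Cr}g(x) + \M_{p,Cr}g(y))$, which is the required $A_p$-connectivity estimate. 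The main technical point is this two-step limit: first using the infimum over $\tau$ to kill the strict-inequality slack in the definition of $\mathcal{E}^p_{x,y,\tau,C}$, and then combining Arzela-Ascoli with lower semi-continuity of curve integrals to extract one curve that satisfies the full unrestricted bound.
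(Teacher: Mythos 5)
Your proof is correct and follows essentially the same route as the paper: truncate and rescale $g$ into $[0,1]$, apply the $\alpha^p$ bound at each level, and recover a single curve via Arzela--Ascoli combined with lower semi-continuity of curve integrals and monotone convergence. The only difference is that your $N$-dependent slack $1/N^2$ collapses the paper's two limiting processes (in $N$ and then in $\epsilon$) into one, and your use of $\tau>M/N$ handles the strict inequality in the definition of $\mathcal{E}^p_{x,y,\tau,C}$ slightly more carefully than the paper does.
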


\begin{proof}
 If $X$ is $A_p$-connected, then for any $g \in \mathcal{E}^p_{x,y,\tau,C}$, we have
 $$\inf_{\gamma \in \Gamma_{x,y}^C} \int_\gamma g ~ds \leq C_A\tau d(x,y),$$
 since $\M_{p,Cr}g (x) + \M_{p,Cr}g (y) < \tau$ by assumption. Now, dividing both sides by $d(x,y)$ and taking a supremum over $g \in \mathcal{E}^p_{x,y,\tau, C}$ and $x,y$ gives the desired inequality for $\alpha^p$.
 
 The converse direction is somewhat more involved, as the $g$ in the $A_p$-connectivity condition need not be bounded.  This can be resolved with a limiting argument which uses the completeness of $X$. Now, to verify $A_p$-connectivity, we need to fix arbitrary $x,y \in X$ and a lower semi-continuous non-negative $g$ and find a curve $\gamma \in \Gamma_{x,y}^C$ with
 $$\int_{\gamma} g ~ds \leq C_{A} r\left( \M_{p,Cr}g(x) + \M_{p,Cr}g(y) \right).$$
 
 First, let $g_{N} = \min\{g,N\}$, and $\overline{g_N} = g_N/(2N)$. Both of these functions are lower semi-continuous and $\overline{g_N}(z) \in [0,1]$ for every $z \in X$. Let $\tau_N = \left( \M_{p,Cr}g_N(x) + \M_{p,Cr}g_N(y) \right)/(2N)$. Since $g_N$ converges to $g$ and is a monotone sequence, it is not hard to see that $\lim_{N \to \infty} 2N\tau_N = \M_{p,Cr}g(x) + \M_{p,Cr}g(y)$. 
 
 Now, by linearity it is easy to see that
 $$ \M_{p,Cr}\overline{g_N}(x) + \M_{p,Cr}\overline{g_N}(y) = \tau_N \in [0,1].$$
 
 Thus, $\overline{g_N} \in \mathcal{E}^p_{x,y,\tau_N,C}$. Then, from the definition of $\alpha^p(C,\tau_N)$, and the estimate for $\alpha^p$, we find for every $\epsilon>0$ a curve $\gamma_{\epsilon,N} \in \Gamma_{x,y}^C$ such that
 $$\int_{\gamma_{\epsilon,N}} \overline{g_N} ~ds \leq (\alpha^p(C,\tau_N) + \epsilon/N) d(x,y) \leq (C_A \tau_N + \epsilon/N) d(x,y).$$
 
 Multiplying both sides by $N$, we obtain
 $$\int_{\gamma_{\epsilon,N}} \min\{g,N\} ~ds \leq ( 2 C_A \tau_N N + \epsilon) d(x,y).$$
 
 Since $\gamma_{\epsilon, N} \in \Gamma_{x,y}^C$, and due to Arzela-Ascoli the set $\Gamma_{x,y}^C$ is a compact family of curves (with respect to uniform convergence), we can find a subsequential limit $\gamma_{\epsilon}$ of $\gamma_{\epsilon, N}$ as $N \to \infty$. To simplify notation, reindex so that this is the original sequence.
 
 Then, using monotone convergence and the lower semi-continuity of curve integrals.
 \begin{align*}
    \int_{\gamma_\epsilon} g ~ds &\leq \lim_{M \to \infty} \int_{\gamma_{\epsilon}} g_M ~ds \leq  \lim_{M \to \infty} \liminf_{N \to \infty} \int_{\gamma_{\epsilon,N}} \min\{g,M\} ~ds \\
                            &\leq \lim_{M \to \infty} \liminf_{N \to \infty} \int_{\gamma_{\epsilon,N}} \min\{g,N\} ~ds \\
                            &\leq \lim_{M \to \infty} \liminf_{N \to \infty}(2C_A \tau_N N + \epsilon) d(x,y)\\
                            &\leq (C_A (\M_{p,Cr}g(x) + \M_{p,Cr}g(y)) + \epsilon) d(x,y).
 \end{align*}
 
 Finally, letting $\epsilon$ tend to zero and using the lower semi-continuity of curve integrals again, we obtain a limit curve $\gamma$ of some subsequence of $\gamma_\epsilon$ such that
 \begin{align*}
    \int_{\gamma} g ~ds \leq C_A d(x,y)(\M_{p,Cr}g(x) + \M_{p,Cr}g(y)),
 \end{align*}
 as required.
 
\end{proof}

There is a simple sub-linear estimate for $\alpha^p$.

\begin{lemma}\label{lem:sublin} Let $K \geq 1$ and $p \in [1,\infty)$. Then
$$\alpha^p(C,K\tau) \leq K \alpha^p(C,\tau).$$
\end{lemma}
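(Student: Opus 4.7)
The plan is to reduce to the definition of $\alpha^p(C,\tau)$ by a simple rescaling. The key observation is that both the $L^p$-maximal operator and the curve integral $\int_\gamma \cdot \, ds$ are positively homogeneous, while the class $\mathcal{E}^p_{x,y,\tau,C}$ is defined by two conditions on $g$: the constraint $0 \leq g \leq 1$, and the maximal-function bound $\M_{p,Cr}g(x)+\M_{p,Cr}g(y) < \tau$. Dividing an obstacle function by a constant $K \geq 1$ relaxes the first constraint (since $[0,1/K] \subset [0,1]$) and scales the second constraint by $1/K$, pushing it from radius $K\tau$ down to radius $\tau$.

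Concretely, I would fix $x,y \in X$ and $g \in \mathcal{E}^p_{x,y,K\tau,C}$, and set $\tilde g = g/K$. Since $K \geq 1$, the function $\tilde g$ is lower semi-continuous with values in $[0,1]$, and positive homogeneity gives
$$\M_{p,Cr}\tilde g(x) + \M_{p,Cr}\tilde g(y) = \frac{1}{K}\bigl(\M_{p,Cr}g(x)+\M_{p,Cr}g(y)\bigr) < \tau,$$
so $\tilde g \in \mathcal{E}^p_{x,y,\tau,C}$. Applying the definition of $\alpha^p(C,\tau)$, for every $\epsilon>0$ there is a curve $\gamma \in \Gamma^C_{x,y}$ with $\frac{1}{d(x,y)}\int_\gamma \tilde g \, ds \leq \alpha^p(C,\tau)+\epsilon$. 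Multiplying both sides by $K$ and using linearity of the curve integral, then taking the infimum over $\gamma$, the supremum over $g \in \mathcal{E}^p_{x,y,K\tau,C}$ and $(x,y) \in X \times X$, and finally letting $\epsilon \to 0$, yields the desired inequality $\alpha^p(C,K\tau) \leq K\alpha^p(C,\tau)$.

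There is essentially no obstacle here: the estimate is a one-line scaling identity, and the only subtlety worth flagging is that the $[0,1]$-constraint in the definition of $\mathcal{E}^p$ is preserved only under division by $K \geq 1$, which is exactly where the hypothesis of the lemma is used. This also explains structurally why the sublinear scaling is natural: for $K<1$ the analogous rescaled function $g/K$ would leave the admissible class, so the symmetric ``inverse'' inequality $\alpha^p(C,\tau) \leq (1/K)\alpha^p(C,K\tau)$ cannot be obtained by the same argument.
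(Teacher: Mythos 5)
Your proposal is correct and follows essentially the same route as the paper: rescale $g \mapsto g/K$, observe that the $[0,1]$-constraint and the maximal-function bound are both preserved (using $K \geq 1$ for the former), and use homogeneity of the curve integral before passing to the infimum and suprema. The only cosmetic difference is your explicit $\epsilon$-approximation of the infimum, which the paper handles by directly comparing the inf/sup expressions.
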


\begin{proof}
 Let $g \in \mathcal{E}^p_{x,y,K\tau,C}$, then $\overline{g}_K = g/K \in \mathcal{E}^p_{x,y,\tau,C}$. Note, that if $K \geq 1$, then $\overline{g}$ still has values in $[0,1]$. In particular, for any $x,y$ and $\gamma$, we have
 $$\frac{1}{d(x,y)}\int_{\gamma} g ~ds = K \frac{1}{d(x,y)} \int_{\gamma} \overline{g}_K ~ds,$$
 and so
 \begin{align*}
    \alpha^p(C,K\tau) &= \sup_{x,y} \sup_{g \in \mathcal{E}^p_{x,y,K\tau,C}} \inf_{\gamma \in \Gamma_{x,y}^C} \frac{1}{d(x,y)}\int_{\gamma} g ~ds \\
		    &= K \sup_{x,y} \sup_{g \in \mathcal{E}^p_{x,y,K\tau,C}} \inf_{\gamma \in \Gamma_{x,y}^C} \frac{1}{d(x,y)}\int_{\gamma} \overline{g}_K ~ds \\
		    &\leq K \sup_{x,y} \sup_{h \in \mathcal{E}^p_{x,y,\tau,C}} \inf_{\gamma \in \Gamma_{x,y}^C} \frac{1}{d(x,y)}\int_{\gamma} h ~ds = K \alpha^p(C,\tau).\\
 \end{align*}
\end{proof}

Since in the following proof we are using $A_p$ connectivity to prove $A_q$-connectivity, we will explicate their connectivity constants with an additional subscript. That is $A_p$ connectivity will be assumed to hold with constants $(C,C_{A,p})$, and we will prove $A_q$-connectivity with different constants $(L, C_{A,q})$ and with $L \geq C$.

\begin{theorem} Assume $p>1$. If $(X,d,\mu)$ is $D$-doubling and $A_p$-connected (with constants $C,C_{A,p}$), then there is a $\epsilon(D,p,C_{A,p})$ such that $X$ is also $A_{q}$-connected for all $p-\epsilon(D,p,C_{A,p})<q<p$ with constants depending on $C,C_{A,p},p$ and $q$.
\end{theorem}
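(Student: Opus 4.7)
The plan is to verify $A_q$-connectivity through the equivalent scale-invariant condition $\alpha^q(L,\tau) \leq C_{A,q}\tau$ of Lemma \ref{lem:apchar}, where $L \geq C$ is enlarged to accommodate refilling at smaller scales. Thus I fix $x,y$ with $r = d(x,y)$ and a lower semi-continuous $g\co X \to [0,1]$ satisfying $\M_{q,Lr}g(x) + \M_{q,Lr}g(y) < \tau$, and aim to exhibit a curve $\gamma \in \Gamma_{x,y}^L$ with $\int_\gamma g\,ds \leq C_{A,q}\tau r$.

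The central step, which formalizes the informal discussion at the end of the introduction, is to establish a concentration-sensitive strengthening of $A_p$-connectivity: if in addition the support $E$ of $g$ satisfies $\M_{Lr}1_E(x) + \M_{Lr}1_E(y) \leq \eta$, then one can produce a curve with
\[
  \int_\gamma g\,ds \leq C' \eta^\beta r \left( \M_{p,Lr}g(x) + \M_{p,Lr}g(y) \right)
\]
for some positive exponent $\beta = \beta(p,D,C_{A,p})$. I would prove this by iterative refilling. Starting from the curve supplied by the hypothesized $A_p$-connectivity, one identifies the subarcs that meet a level set $\{\M_{p,s}g > \lambda\}$ for a carefully chosen threshold $\lambda$, and refills each such subarc using $A_p$-connectivity applied at the smaller scale of the subarc. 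Lemma \ref{lem:maxmax} controls the ambient maximal function of this level set by $\lambda^{-p}(\M_{p,Lr}g)^p$ times a doubling factor, so under the concentration hypothesis the refilling set is a small fraction of every ambient ball, and the accumulated error forms a geometrically convergent series whose residual is absorbed into the next stage.

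With the refined bound in hand, one passes from $p$ to $q$ via a dyadic level-set decomposition of $g$. Writing $g = \sum_{k} (g \wedge \lambda_{k+1} - g \wedge \lambda_k)$ along a geometric sequence of levels $\lambda_k$, each summand is supported on $E_k = \{g > \lambda_k\}$. Since $g \leq 1$ and $q \leq p$, the pointwise inequality $1_{E_k} \leq (g/\lambda_k)^q$ yields by Chebyshev the concentration bound $\M_{Lr}1_{E_k}(z) \leq \lambda_k^{-q}\tau^q$ at $z\in\{x,y\}$; here is where the $L^q$ hypothesis is converted into data usable by the $L^p$-type refilling estimate. Applying the concentration-sensitive bound to each piece produces a gain of $\lambda_k^{-q\beta}$. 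Selecting a single curve that works simultaneously for all pieces, with the help of the sublinearity of $\alpha^p$ from Lemma \ref{lem:sublin}, and summing in $k$ yields the desired $A_q$-bound $C_{A,q}\tau r$, provided the resulting geometric series in $k$ converges.

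The main obstacle is exactly this convergence, which determines the threshold $\epsilon(D,p,C_{A,p})$. Convergence requires $q\beta > 1$, and since $\beta$ is governed by the refilling exponent coming from $p$, $D$, and $C_{A,p}$, this forces $p - q < \epsilon(D,p,C_{A,p})$ with a quantitative dependence matching the bound advertised in the introduction. Carefully tracking exponents through the refilling iteration and then through the dyadic summation is the core quantitative calculation; the remainder of the argument is bookkeeping, combined at the end with Theorem \ref{thm:classification} to upgrade $A_q$-connectivity back into the desired $(1,q)$-Poincar\'e inequality.
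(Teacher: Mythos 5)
Your overall frame (reduce to $\alpha^q(L,\tau)\le C_{A,q}\tau$ via Lemma \ref{lem:apchar}, refill bad subarcs of an initial curve, control the bad set with Lemma \ref{lem:maxmax}) matches the paper, but the two load-bearing steps of your architecture have genuine gaps. First, the dyadic decomposition $g=\sum_k g_k$ requires, as you yourself note, ``selecting a single curve that works simultaneously for all pieces''; but $A_p$-connectivity (and any concentration-sensitive refinement of it) produces a \emph{different} curve for each obstacle function, and the sublinearity of Lemma \ref{lem:sublin} only rescales a single obstacle --- it gives no mechanism for merging curves associated to distinct obstacles. The paper resolves exactly this by never decomposing $g$: it builds one auxiliary obstacle $h=\frac1k\sum_{i=1}^k M^i 1_{E_i}$ out of superlevel sets of $\M_{q,\delta Lr}g$, applies $A_p$-connectivity \emph{once} to $h$, and then pigeonholes over the $k$ levels to extract a single good index $i_0$ for which the curve barely meets $E_{i_0}$.

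Second, your concentration-sensitive estimate with gain $\eta^\beta$, to be proved by ``iterative refilling'' with a ``geometrically convergent series,'' hides the central difficulty: the integral over each replacement arc is controlled by $\M_{q,\cdot}g$ at the gap endpoints, i.e.\ by the truncation level $\lambda$, and there is no way to choose $\lambda$ a priori, independently of $g$, so that simultaneously the bad set is small and the level contributes an absorbable amount. This is precisely the obstruction the paper flags (``we do not know how to choose $\Lambda$ a priori'') and circumvents by proving only the self-referential inequality $\alpha^q(L,\tau)\le S\tau+\delta\max_{i}M^{-i}\alpha^q(L,M^i\tau)$, then absorbing the second term using sublinearity and the a priori boundedness of $\alpha^q$; no power gain in $\eta$ is ever established or needed, and a single refilling step suffices. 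Finally, your convergence criterion $q\beta>1$ is an absolute lower bound on $q$ (since $\beta$ depends only on $p$, $D$, $C_{A,p}$), not a condition of the form $p-q<\epsilon(D,p,C_{A,p})$; in the paper the closeness of $q$ to $p$ enters through the requirement $M^{k(p-q)}\le 2$, the cost of measuring $h$ in $L^p$ while holding only $L^q$ data on $g$. As written, the proposal does not yield the theorem.
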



\begin{proof} 
Recall the definitions of $\alpha^q$ in \eqref{alphaindex}. As discussed above, it is sufficient to show that there is an $\epsilon(D,p,C_{A,p})$, such that if $p-\epsilon(D,p,C_{A,p})<q<p$, then the space is $A_q$-connected. Fix this $\epsilon(D,p,C_{A,p})$ to be determined, and any such exponent $q$. The $A_q$ connectivity can be reduced by Lemma \ref{lem:apchar} to showing that there are some constants $L$ and $C_{A,q}$ such that for all $\tau \in [0,1]$
$$\alpha^q(L,\tau) \leq C_{A,q} \tau.$$

This estimate is shown by proving that for $\delta \in (0,1)$ there are some $k \in \N, M \geq 2, S \geq 1$ and for all $L \geq \frac{C}{1-\delta}$ and $\tau \in [0,1]$ we have
\begin{equation}\label{eq:crucialest}
 \alpha^q(L,\tau)\leq S\tau + \delta \max_{i=1, \dots, k}M^{-i}\alpha^q(L,M^{i}\tau).
\end{equation}




If we have this estimate, then Lemma \ref{lem:sublin} gives
\begin{equation}\label{eq:whatwewant}
 \alpha^q(L,\tau)\leq  S\tau + \delta \alpha^q(L,\tau),
\end{equation}
and so
$$\alpha^q(L,\tau)\leq  \frac{S}{1-\delta}\tau,$$
which is the estimate we desire with $C_{A,q} = \frac{S}{1-\delta}$. Note, we used the fact that $\alpha^q$ is bounded since $X$ is $C$-quasiconvex and $L \geq C$. This follows from $A_p$-connectivity since $\Gamma_{x,y}^C$ is not empty for any $x,y\in X$, as otherwise $\alpha^p(C,\tau)$ would not be bounded. Thus, it remains to prove \eqref{eq:crucialest}. 

Before we prove this, we wish to give some intuition. A way to think of this estimate \eqref{eq:crucialest} is that, we would really like to prove for some $\Lambda,L > 1$, that $\alpha^q(L,\tau)\leq  S\tau + \delta \Lambda^{-1}\alpha^q(L,\Lambda \tau)$. Here $\alpha^q(L,\tau)$ corresponds to an estimate ``at level'' $\tau$, and we can estimate it from above by a small constant $\delta$ times a term at level $\Lambda \tau$ and a term of the desired form. This corresponds to an iteration at the level of curves, as we will soon see, since an initial curve $\gamma$ is constructed to almost avoid points where $\M_{q,Ld(x,y)}g(z) > \Lambda \tau$. It can not fully avoid this set, but replacing the portions in this set, if it is sufficiently small, gives a contribution of the form $\delta \Lambda^{-1}\alpha^q(L,\Lambda \tau)$. 

However, we do not know how to choose $\Lambda$ \emph{a priori} in a way independent of $g \in \mathcal{E}^q_{x,y,\tau,L}$. The proof instead shows that we can always find one ``level'' $\Lambda \tau = M^{i_0} \tau$ for some $i_0 =1, \dots, k$ where a desired quantity is sufficiently small (compared to $\delta$). This leads to the less intuitive estimate \eqref{eq:crucialest} involving the maximum. However, this suffices for our purposes. We note, that the curves used for $\alpha^q$ have length $Ld(x,y)$ due to the fact that the iteration necessarily increases the lengths slightly. However, $L = \frac{C}{1-\delta}$ will suffice for our purposes. It is chosen so that $C + \delta L = L$.

Next, fix any $M \geq 2$ and any $\delta \in (0,1)$. We will also fix $k \geq 1$ to be determined later and $L = \frac{C}{1-\delta}$. In order to estimate $\alpha^q(L,\tau)$ we are taking a supremum over pairs of points and functions. Thus, let $x,y \in X$, $\tau \in [0,1]$ and let
\begin{equation}\label{eq:gdef}
g \in \mathcal{E}^q_{x,y, \tau,L} 
\end{equation}
be arbitrary. Denote $d(x,y)=r$. By adding a small constant to $g$ we can assume $\left(\M_{q,Lr} g(x)\right)^q + \left(\M_{q,Lr} g(y)\right)^q>0$. First define $F_i=\{z | \M_{q,\delta Lr} g > M^i \tau /2\}$, for which Lemma \ref{lem:maxmax} gives for $z=x,y$
$$\M_{Cr}1_{F_i}(z)\leq \frac{2^q D^4 \left(\M_{q,\delta Lr+Cr} g(z)\right)^q}{M^{iq} \tau^q} \leq \frac{2^p D^4 \left(\M_{q,Lr} g(z)\right)^q}{M^{iq}\tau^q}.$$
From Estimates \eqref{eq:gdef}, \eqref{eq:obsdef} and the definition of $F_i$, it follows that $x,y \not\in F_i$. We need to enlarge these sets slightly to be open. Using Lemma \ref{lem:approxset} and Remark \ref{rmk:multiplepoints} we can find open sets $E'_i$ such that $F_i \subset E'_i$ , $x,y \not\in E'_i$ , such that for $z=x,y$
$$\M_{Cr} 1_{E'_i \setminus F_i} (z) \leq \frac{\left(\M_{q,Lr} g(x)\right)^q + \left(\M_{q,Lr} g(y)\right)^q}{k M^{iq}\tau^q}$$
for $i=1, \dots, k$. Now, define $E_i \defeq \bigcup_{j\geq i} E'_j$. We have
$$\M_{Cr}1_{E_i \setminus F_i} (z) \leq \sum_{j=1}^i \M_{Cr}1_{E'_j \setminus F_j} (z) \leq \frac{\left(\M_{q,Lr} g(x)\right)^q + \left(\M_{q,Lr} g(y)\right)^q}{ M^{iq}\tau^q},$$
and thus
\begin{equation}\label{eq:maxestforset}
 \M_{Cr} 1_{E_i} (z) \leq \M_{Cr} 1_{F_i} (z)+ \frac{\left(\M_{q,Lr} g(x)\right)^q + \left(\M_{q,Lr} g(y)\right)^q}{M^{iq}\tau^q} \leq 2^{p+1}D^4\frac{\left(\M_{q,Lr} g(x)\right)^q + \left(\M_{q,Lr} g(y)\right)^q}{M^{iq}\tau^q}.
\end{equation}

Next, define the function
$$h \defeq \frac{1}{k}\sum_{i=1}^{k} M^{i} 1_{E_i}.$$
Since $E_i$ are open the function is lower semi-continuous.

Recall that $E_i \subset E_j$ for $i>j$. Then for $x \in E_{l} \setminus E_{l+1}$, where $l=1, \dots, k-1$, or $x \in E_l$ if $l=k$, we have
$$h^p(x) = \frac{1}{k^p} \left(\sum_{i=1}^{k} M^{i} 1_{E_i}(x)\right)^p \leq \frac{1}{k^p}\left(\sum_{i=1}^{l} M^{i} \right)^p \leq \frac{2^p M^{lp}}{k^p}1_{E_l}(x).$$
Thus, it is easy to see,
\begin{equation}\label{eq:hbound}
 h^p \leq \frac{2^p}{k^p}\sum_{i=1}^k M^{ip} 1_{E_i}
\end{equation}

Now, take an aritrary $0<s<Cr$ and compute with $z=x,y$. 
\begin{eqnarray*}
\vint_{B(z,s)} h^p ~d\mu &\overset{\eqref{eq:hbound}}{<}& \frac{2^p}{k^p}\sum_{i=1}^k M^{ip}\frac{\mu(E_i \cap B(z,s))}{\mu(B(z,s))}  \\
&\overset{\eqref{eq:maxestforset}}{\leq}& \frac{2^{p+1}}{k^p}\sum_{i=1}^k 2^{p+1}D^4 M^{ip} \frac{\left(\M_{q,Lr} g(x)\right)^q + \left(\M_{q,Lr} g(y)\right)^q}{M^{iq}} \\
&\leq& \frac{2^{2p+2}D^4 M^{k(p-q)}}{k^{p-1}}\frac{\left(\M_{q,Lr} g(x)\right)^q + \left(\M_{q,Lr} g(y)\right)^q}{\tau^q}  \\
&\overset{\eqref{eq:gdef}, \eqref{eq:obsdef} }{\leq}& \frac{2^{2p+3}D^4 M^{k(p-q)}}{k^{p-1}}
\end{eqnarray*}


So, we get $\M_{p,Cr} h(x) + \M_{p,Cr} h(y) < \frac{4(2^{2p+3}D^4 M^{k(p-q)})^{\frac{1}{p}}}{k^{\frac{p-1}{p}}}.$ Define
$$\Delta \defeq \frac{4C_{A,p}(2^{2p+3}D^4 M^{k(p-q)})^{\frac{1}{p}}}{k^{\frac{p-1}{p}}}.$$
By $A_p$-connectivity, there is a curve $\gamma$ such that
\begin{equation}\label{eq:gammalen}
 \len(\gamma) \leq Cr,
\end{equation}
and
$$\int_{\gamma} h ~ds < \Delta r.$$

Since an the minimum of a set of numbers is a lower bound for its mean, there must be some index  $i_0$ such that
\begin{equation}\label{eq:setintest}
\int_{\gamma} 1_{E_{i_0}} M^{i_0} ~ds \leq \frac{1}{k} \sum_{i=1}^k \int_{\gamma} 1_{E_{i}} M^{i} ~ds  = \int_\gamma h ~ds < \Delta r. 
\end{equation}

Now, we can fix our choices of $k$ and $\epsilon(D,p,C_{A,p})$. Choose $k$ so large that $\frac{4C_{A,p}(2^{2p+3}D^4 )^{\frac{1}{p}}}{k^{\frac{p-1}{p}}} < \frac{\delta}{2}.$ and $\epsilon(D,p,C_{A,p})$ so small that
$$M^{k \frac{\epsilon(D,p,C_{A,p})}{p}} \leq 2.$$

Then, since $p-q < \epsilon(D,p,C_{A,p})$ by assumption, 
$$\Delta = \frac{4C_{A,p}(2^{2p+3}D^4M^{k(p-q)})^{\frac{1}{p}}}{k^{\frac{p-1}{p}}} < M^{k \frac{\epsilon(C,C_{A,p},p)}{p}} \frac{\delta}{2} \leq \delta.$$
Finally, we obtain from this and estimate \eqref{eq:setintest} that
$$\int_{\gamma} 1_{E_{i_0}}  ~ds < \delta M^{-i_0}r.$$

Parametrize $\gamma$ by unit speed on the interval $[0,\len(\gamma)]$ to be a $1$-Lipschitz curve. Since $E_{i_0}$ is open, so is $U=\gamma^{-1}(E_{i_0})$. Clearly $|U| <  \delta M^{-i_0}r$. Note $0,\len(\gamma) \not\in U$ since $\gamma(0)=x \not\in E_{i_0}$ and $\gamma(\len(\gamma)) = y \not\in E_{i_0}$. Define $K = [0,\len(\gamma)] \setminus U$. Then, clearly $0,\len(\gamma) \in K$. Also, from \eqref{eq:gammalen} 
\begin{equation}\label{eq:Ksize}
 |K| \leq \len(\gamma) \leq Cr.
\end{equation}
By our definition of $K$ we have $|[0,\len(K)]\setminus K| < \delta M^{-i_0}r$. We will now redefine $\gamma$ on the small set $[0,\len(K)]\setminus K$.

We can express the open set $[0,\len(\gamma)] \setminus K$ as a countable union of its components, i.e. $[0,\len(\gamma)] \setminus K = \bigcup_{j \in J}(a_j,b_j)$ for some countable (possibly finite) index set $J$. These intervals $(a_j,b_j)$ are also referred to as the gaps of $\gamma$. Define $d_j\defeq d(\gamma(b_j),\gamma(a_j))$. By construction and since $\gamma$ is $1$-Lipschitz, we have
\begin{equation}\label{eq:sumdjest}
 \sum_j d_j \leq \sum_{j \in J} |b_j-a_j| \leq |[0,\len(\gamma)] \setminus K|  < \delta M^{-i_0}r \leq \delta r.
\end{equation}
In particular, since $\gamma$ is parametrized by length, $d_j \leq |b_j-a_j|$ and $d_j \leq \delta r$.



For each gap $(a_j,b_j)$, since $\gamma(K)\cap E_{i_0} = \emptyset$ and $a_j,b_j \in K$, we have $\gamma(a_j), \gamma(b_j) \not\in E_{i_0}$, and moreover $\gamma(a_j), \gamma(b_j) \not\in F_{i_0}$ (even in the possible cases $\gamma(a_j),\gamma(b_j)=x,y$). Also, $Ld_j \leq \delta Lr$, so by the definition of $F_{i_0}$ we obtain 
$$\M_{q,Ld_j} g(\gamma(a_j)) + \M_{q,Ld_j} g(\gamma(b_j))\leq \M_{q,\delta Lr} g(\gamma(a_j)) + \M_{q,\delta Lr} g(\gamma(b_j)) \leq M^{i_0}\tau.$$

Thus, by the definition of $\alpha^q(L,M^{i_0}\tau)$, there exists curves $\gamma_j$ connecting $\gamma(a_j)$ to $\gamma(b_j)$ of length at most $Ld_j$ with
\begin{equation}\label{eq:gammajint}
 \int_{\gamma_j} g ~ds \leq d_j \alpha^q(L,M^{i_0}\tau),
\end{equation}
and $\len(\gamma_j) \leq Ld_j \leq L|b_j-a_j|.$ We can reparametrize these curves as $L$-Lipschitz maps $\gamma_{j}\co [a_j,b_j] \to X$.

Now, define $\gamma' \co [0,\len(\gamma)] \to X$ by $\gamma'(t) = \gamma(t)$, when $t \in K$, and $\gamma'(t) = \gamma_j(t)$, when $t \in (a_j,b_j)$. Clearly $\gamma'$ is $L$-Lipschitz. Since $\gamma'|_K = \gamma$, we obtain that 
$$\len(\gamma') \leq \len(\gamma) + \sum_{j \in J} \len(\gamma_j) \leq Cr + \sum_{j \in J} Ld_j \leq (C+\delta L) d(x,y) = Ld(x,y).$$
So we get $\gamma' \in \Gamma^L_{x,y}$. Note, for $t \in K$ we have $\gamma(t) \not\in F_{i_0}$ and thus since $g$ is lower semi-continuous $g(\gamma(t)) \leq \M_{q,\delta Lr} g(\gamma(t)) \leq \tau M^{i_0} \leq M^k \tau$.

Further, 
\begin{eqnarray*}
\inf_{\sigma \in \Gamma^L_{x,y}} \frac{1}{d(x,y)}\int_{\sigma} g ~ds &\leq& \frac{1}{d(x,y)}\int_{\gamma'} g ~ds \\
&=& \frac{1}{d(x,y)}\int_{K} g(\gamma(t)) ~dt + \frac{1}{d(x,y)}\sum_{j\in J} \int_{\gamma_j} g ~ds \\
&\overset{\eqref{eq:Ksize}}{=}&  CM^k\tau + \frac{1}{d(x,y)}\sum_j \int_{\gamma_j} g ~ds \\
&\overset{\eqref{eq:gammajint}}{\leq}& CM^k\tau + \frac{1}{d(x,y)}\sum_j d_j \alpha^q(L,M^{i_0}\tau) \\
&\overset{\eqref{eq:sumdjest}}{\leq}& CM^k\tau + \delta  M^{-i_0} \alpha^q(L,M^{i_0}\tau)  \\
&\leq&  CM^k\tau + \delta \max_{i=1, \dots, k}M^{-i}\alpha^q(L,M^{i}\tau).
\end{eqnarray*}

The right hand side now no longer involves $x,y$ or $g$. Taking suprema over all functions $g \in \mathcal{E}^q_{x,y, \tau,L}$ and all pairs $x,y \in X$ gives
$$\alpha^q(L,\tau)\leq  CM^k\tau +\delta  \max_{i=1, \dots, k}M^{-i}\alpha^q(L,M^{i}\tau).$$

This gives the desired estimate with $S = CM^k \tau$. \\

Recall, we required the estimates
$$\frac{4C_{A,p}(2^{2p+3}D^4)^{\frac{1}{p}}}{k^{\frac{p-1}{p}}} < \frac{\delta}{2},$$
and
$$M^{k \frac{\epsilon(D,p,C_{A,p})}{p}} \leq 2.$$

These can be obtained by setting
$$k \defeq \frac{(2^{6p+3}C_{A,p}^p D^4)^{\frac{1}{p-1}}}{\delta^\frac{p}{p-1}},$$
and
$$\epsilon(D,p,C_{A,p}) \defeq \frac{p}{\log_2(M)k}.$$
A more detailed analysis will follow after the proof.
\end{proof}

We can present the proof of Keith-Zhong self-improvement using this result.

\begin{proof}[Proof of Theorem \ref{thm:maintheorem}] From Theorem \ref{thm:classification} we obtain that $X$ is $A_p$-connected with constants $(C,C_A)$. From the previous theorem we see that $X$ is also $A_q$-connected for all $p-\epsilon(D,p,C_A)<q<p$. Again, applying Theorem \ref{thm:classification} we see that $A_q$-connectivity implies the $(1,q)$-Poincar\'e inequality. This completes the proof. Since $C_A$ depends quantitatively on $D$ and the constant $C_{PI}$, we can express $\epsilon(D,p,C_A)$ in terms of $D$
 and $C_{PI}$. See the discussion following for some more detail. \end{proof}

Finally, with the above choice of $k$, we see what the bound for $q$ is. The bounds become a little easier of $M=2$ and $\delta = \frac{1}{2}$. Then, we obtain a bound for $\epsilon(D,p,C_{A,p})$ of the form
\begin{equation}\label{eq:epsfinal}
\epsilon \leq \frac{p}{(2^{7p+3}C_{A,p}^p D^4)^{\frac{1}{p-1}}}.
\end{equation}

As $p$ further increases, the asymptotic behavior of this expression is $\frac{p}{2^7C_{A,p}}$. This seemingly looses dependence on the doubling constant $D$. However, Theorem \ref{thm:classification} gives that the $A_p$-connectivity constant $C_{A,p}$ is related to both $C_{PI}$ and $D$. More precisely, by using similar techniques to \cite{heinonen1998quasiconformal} and the arguments in Theorem \ref{thm:classification}, we can show that $C_{A,p} \leq 2^6D^3C_{PI}$ suffices, which gives the following bound for Theorem \ref{thm:maintheorem} (when $2^p \geq D^3$)
$$\epsilon \leq \frac{p}{(2^{13p+3}C_{PI}^p D^{3p+4})^{\frac{1}{p-1}}}.$$ 

\subsection{Remarks on localizing the estimates}

The same proof as above, with slight additional care, can be applied to the localized version
\begin{equation}\label{alphaindexloc}
\alpha_{r_0}^p(C,\tau)\defeq \sup_{x,y \in X, d(x,y) \leq r_0} \sup_{g \in \mathcal{E}^p_{x,y, \tau,C} } \inf_{\gamma \in \Gamma_{x,y}^C} \frac{1}{d(x,y)}\int_\gamma g ~ds.
\end{equation}

The proof also needs a localized version of Theorem \ref{thm:classification} and Theorem \cite[Theorem 2]{keith2003modulus}. Here, one loses at most a factor of $2$ in applying the proof in \cite{heinonen1998quasiconformal}. In order to be slightly more precise we will trace the proof backwards. In proving the $(1,q)$-Poincar\'e inequality at scale $r_0$, we reduce it to the $A_q$-connectivity at scale $2r_0$, i.e. for points $d(x,y) \leq 2r_0$, using a local version of Theorem \ref{thm:classification}. To obtain $A_q$-connectivity from $A_p$-connectivity at scale $2r_0$, we need to repeat the proof of Theorem \ref{thm:maintheorem} which applies Lemma \ref{lem:maxmax}. This requires $D$-measure doubling up to scale $20Cr_0$ (with the choice of $\delta = \frac{1}{2}$). Finally, to obtain $A_p$-connectivity at scale $2r_0$ we need $D$-doubling up to scales $20Cr_0$, and a $(1,p)$-Poincar\'e inequality at scale $4r_0$. All the required estimates hold, if we assume 
$$r_0 \leq \min\bigg\{\frac{r_{PI}}{4}, \frac{r_D}{20C}\bigg\}.$$ Recall, $r_{PI}$ is the scale at which the $(1,p)$-Poincar\'e inequality holds, and $r_D$ is the scale for the doubling property. This gives Theorem \ref{thm:maintheoremlocal}.











\bibliographystyle{amsplain}\bibliography{geometric}

\end{document}